\newtheorem{theorem}{Theorem}[section]
\newtheorem{lemma}[theorem]{Lemma}
\theoremstyle{definition}
\newtheorem{prop}[theorem]{Proposition}
\newtheorem{cor}[theorem]{Corollary}
\theoremstyle{remark}
\numberwithin{equation}{section}
\begin{document}

\baselineskip=16pt

\title[Improved subconvexity bounds]
{Improved subconvexity bounds for
$GL(2)\times GL(3)$ and $GL(3)$ $L$-functions
by weighted stationary phase}

\author{Mark McKee \and Haiwei Sun
\and Yangbo Ye}

\address{Mark McKee${}^1$: mark-mckee@uiowa.edu}

\address{Haiwei Sun${}^2$: hwsun@sdu.edu.cn}

\address{Yangbo Ye${}^{1,3}$: yangbo-ye@uiowa.edu}

\address{${}^1$
Department of Mathematics, The University of Iowa,
Iowa City, Iowa 52242-1419, United States}

\address{${}^2$
School of Mathematics and Statistics, Shandong University,
Weihai, Shandong 264209, China}

\address{${}^3$
Corresponding author}

\address{These authors contributed equally to this work.}

% General info
\subjclass[2010]{11F66, 11M41, 41A60}

%\date{September 8, 2011.}

%\dedicatory{This paper is dedicated to our advisors.}

\keywords{
$GL(3)$;
$GL(3)\times GL(2)$;
automorphic $L$-function;
Rankin-Selberg $L$-function;
subconvexity bound;
first derivative test;
weighted stationary phase}
\thanks{The second author is partially supported by the
National Natural Science Foundation of China (Grant No. 11601271), and China Postdoctoral Science Foundation Funded Project  (Project No. 2016M602125).}

\begin{abstract}
Let $f$ be a fixed self-contragradient Hecke-Maass form for
$SL(3,\mathbb Z)$, and $u$ an even Hecke-Maass form for
$SL(2,\mathbb Z)$ with Laplace eigenvalue $1/4+k^2$, $k\geq0$.
A subconvexity bound $O\big((1+k)^{4/3+\varepsilon}\big)$
in the eigenvalue aspect is
proved for the central value at $s=1/2$ of the Rankin-Selberg
$L$-function $L(s,f\times u)$. Meanwhile, a subconvexity
bound $O\big((1+|t|)^{2/3+\varepsilon}\big)$ in the $t$
aspect is proved for $L(1/2+it,f)$. These bounds improved
corresponding subconvexity bounds proved by Xiaoqing Li
(Annals of Mathematics, 2011). The main techniques in the
proofs, other than those used by Li, are $n$th-order
asymptotic expansions of exponential integrals in the cases
of the explicit first derivative test, the weighted first
derivative test, and the weighted stationary phase integral,
for arbitrary $n\geq1$. These asymptotic expansions
sharpened the classical results for $n=1$ by Huxley.
\end{abstract}

\maketitle
%\section*{This is an unnumbered first-level section head}
%This is an example of an unnumbered first-level heading.
%% The correct journal style for \specialsection is all uppercase; a known bug
%% in amsart.cls prevents this, so input must be uppercase until it is fixed.
%\specialsection*{This is a Special Section Head}
%\specialsection*{THIS IS A SPECIAL SECTION HEAD}
%This is an example of a special section head%
%%%%%%%%%%%%%%%%%%%%%%%%%%%%%%%%%%%%%%%%%%%%%%%%%%%%%%%%%%%%%%%%%%%%%%%%
%\footnote{Here is an example of a footnote. Notice that this footnote
%text is running on so that it can stand as an example of how a footnote
%with separate paragraphs should be written.
%\par
%And here is the beginning of the second paragraph.}%
%%%%%%%%%%%%%%%%%%%%%%%%%%%%%%%%%%%%%%%%%%%%%%%%%%%%%%%%%%%%%%%%%%%%%%%%

\section{Introduction}\label{sec:intro}

Bounds for automorphic $L$-functions on the critical line
Re$(s)=1/2$ are central questions in number theory and have
far-reaching applications (cf. Iwaniec and Sarnak \cite{IwncSrnk}
and Michel \cite{Mchl}). The ultimate conjectured bounds are
predicted by the Lindel\"of Hypothesis, while trivial bounds include
the convexity bounds as a consequence of the
Phragm\'en-Lindel\"of principle. Any bound which have a power
saving over the corresponding convexity bound is highly
non-trivial and called a subconvexity bound.

The strength of a subconvexity bound is crucial. There are
important applications which depend on the strength of the
subconvexity bounds. A notable example is the number of
real zeros of a holomorphic Hecke cusp form $f$ for
$SL(2,\mathbb Z)$ of weight $k$, i.e., zeros
of $f$ on $\{iy|y\geq1\}$. By Ghosh and Sarnak \cite{GhshSrnk},
the number of such zeros is $\gg\log k$. Their proof uses
a Weyl-like, i.e., a $1/3$ power-saving, subconvexity bound
for $L(s,f)$ proved by Peng \cite{Peng} and Jutila and Motohashi
\cite{JutlMoto}. Note that a subconvexity bound for $L(s,f)$
with a power saving less than $1/3$ does not suffice in
\cite{GhshSrnk}.

In this paper, we will prove subconvexity bounds for certain
Rankin-Selberg $L$-functions for $GL(3)\times GL(2)$ and
automorphic $L$-functions for $GL(3)$ over $\mathbb Q$ which
improve bounds established by Xiaoqing Li \cite{XLi1}.

\begin{theorem}\label{SumIntBdd}
Let $f$ be a fixed self-contragradient Hecke-Maass form for
$SL(3,\mathbb Z)$ normalized by $A(1,1)=1$, and $\{u_j\}$ an
orthonormal basis of even
Hecke-Maass forms for $SL(2,\mathbb Z)$. Denote by $1/4+t_j^2$,
$t_j\geq0$, the Laplace eigenvalue of $u_j$. Then for large
$T$ and $T^{1/3+\varepsilon}\leq M\leq T^{1/2}$ we have
\begin{eqnarray}\label{SumInt<<}
&&
\sum_j
e^{-(t_j-T)^2/M^2}
L\Big(\frac12,f\times u_j\Big)
+
\frac1{4\pi}
\int_{-\infty}^\infty
e^{-(t-T)^2/M^2}
\Big|L\Big(\frac12-it,f\Big)\Big|^2dt
\ll_{\varepsilon,f}
T^{1+\varepsilon}M
\end{eqnarray}
for any $\varepsilon>0$.
\end{theorem}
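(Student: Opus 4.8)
The plan is to prove \eqref{SumInt<<} along the lines of Xiaoqing Li \cite{XLi1}, namely via an approximate functional equation for the central values, the Kuznetsov trace formula on $SL(2,\mathbb Z)$, and the $GL(3)$ Voronoi summation formula, but with the exponential integrals that appear at the end estimated by the sharper weighted stationary phase and first derivative tests developed in this paper. First, since $f$ is self-dual, the functional equation of $L(s,f\times u_j)$ has root number $+1$, and an approximate functional equation expresses $L(1/2,f\times u_j)$, up to a negligible error, as a short sum
\[
2\sum_{n\geq1}\frac{\lambda_j(n)\,A(1,n)}{\sqrt n}\,V\!\left(\frac{n}{t_j^{3}}\right),
\]
where $\lambda_j(n)$ is the $n$th Hecke eigenvalue of $u_j$ and $V$ is a fixed smooth weight decaying rapidly for $n\gg t_j^{3+\varepsilon}$; for the continuous spectrum one uses the factorization $L(s,f\times E_t)=L(s+it,f)L(s-it,f)$ together with the self-duality of $f$ to produce $|L(1/2-it,f)|^{2}$, the role of $\lambda_j(n)$ now being played by the divisor-type coefficients $\eta_t(n)=\sum_{ab=n}(a/b)^{it}$. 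Because $M\leq T^{1/2}$, the Gaussian localizes the relevant spectral parameters to $t_j,\,t\asymp T$, so the effective length of the $n$-sum is $N\asymp T^{3+\varepsilon}$.

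Next I would apply the Kuznetsov formula to the spectral average
\[
\sum_j e^{-(t_j-T)^2/M^2}\lambda_j(n)+(\text{continuous contribution}),
\]
with the even test function $h(t)=e^{-(t-T)^2/M^2}+e^{-(t+T)^2/M^2}$, splitting it into a diagonal term equal to $\delta_{n=1}$ times $\pi^{-2}\!\int_0^\infty h(t)\,t\tanh(\pi t)\,dt\asymp TM$, and an off-diagonal term $\sum_{c\geq1}c^{-1}S(n,1;c)\,H(4\pi\sqrt n/c)$, where $H$ is the Bessel integral transform of $h$. The diagonal term, paired with the $n=1$ term $A(1,1)=1$ of the approximate functional equation, reproduces exactly the main term of size $TM$ on the right of \eqref{SumInt<<}. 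For the off-diagonal term, the asymptotic analysis of $H$ — sharpened by the $n$th-order expansions proved below — shows that $H(x)$ is negligible unless $x\gg T^{1-\varepsilon}$, so that the $c$-sum is restricted to $c\ll T^{1/2+\varepsilon}$, and that, away from a thin transitional range near $x\asymp T$, $H(x)$ equals $Mx^{-1/2}$ times $e(\pm x/2\pi)$ plus lower-order terms. Opening $S(n,1;c)=\sum_{d\,(c)}^{*}e\big((nd+\overline d)/c\big)$ and applying $GL(3)$ Voronoi summation to the twisted $n$-sum $\sum_n A(1,n)e(nd/c)(\cdots)$ then replaces it by a dual sum over $n'$ and $n_1\mid c$ with coefficients $A(n',n_1)/(n_1 n')$, a Kloosterman/Gauss sum to modulus $c/n_1$ after the $d$-sum is reassembled, and a Hankel-type integral transform of the weight.

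The heart of the argument, and the step I expect to be the main obstacle, is bounding the multiple sum that remains by $O(T^{1+\varepsilon}M)$. Fusing the transform $H$, the weight $V$, and the $GL(3)$ Hankel kernel produces a one-dimensional exponential integral whose phase and amplitude both depend mildly on $c$, $n$ and $n'$; depending on the relative sizes of these variables the integral either has a stationary point, to be evaluated by the weighted stationary phase expansion, or has a monotone phase, to be bounded by the explicit, weighted first derivative test. One genuinely needs \emph{several} terms of these expansions, i.e.\ the $n\geq1$ refinement of Huxley's classical $n=1$ results: the leading stationary phase term, once it is re-inserted into the sums over $c$, $n_1$ and $n'$, must itself be treated by a further first derivative test or Poisson summation, and the tail of the expansion has to be pushed to order $n\geq2$ so that the resulting secondary error term is $\ll T^{1+\varepsilon}M$ on the full range $M\geq T^{1/3+\varepsilon}$, which the classical $n=1$ bound is not quite strong enough to guarantee. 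Keeping all these error terms uniformly under control through the iterated summations — especially in the transition ranges where the stationary point approaches an endpoint of integration — is where the bulk of the technical work lies. Combining the diagonal main term $\ll TM$ with the off-diagonal bound $\ll T^{1+\varepsilon}M$ then yields \eqref{SumInt<<}.
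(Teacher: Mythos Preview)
Your overall strategy is the right one and matches the paper's: approximate functional equation, Kuznetsov, open the Kloosterman sum, apply $GL(3)$ Voronoi, and then evaluate the resulting oscillatory integral by a higher-order weighted stationary phase. Two points, however, need correction.

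First, a minor but not negligible inaccuracy: the approximate functional equation for $L(1/2,f\times u_j)$ is a \emph{double} sum
\[
2\sum_{m\ge1}\sum_{n\ge1}\frac{\lambda_j(n)A(m,n)}{(m^2n)^{1/2}}\,V(m^2n,t_j),
\]
not a single sum over $n$ with $A(1,n)$; the $m$-variable has to be carried through the whole argument (it shows up in the Voronoi modulus and in all the subsequent estimates via $c\le C_2/m$).

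Second, and this is the substantive gap: after the first Voronoi and the stationary phase expansion you cannot finish with ``a further first derivative test or Poisson summation'' on just the \emph{leading} stationary phase term. What actually happens is that pushing the stationary phase to order $n_0$ makes the \emph{error} term acceptable for $M\ge T^{1/3+\varepsilon}$ (this is exactly your observation about needing $n\ge2$), but it simultaneously produces $n_0+1$ new main terms $a_{2j}(y_0)$, $0\le j\le n_0$, each of the same shape as the $j=0$ term treated by Li. A first derivative test on these is too weak; the paper must reinsert \emph{each} $a_{2j}(y_0)$ into the $c,n_1,n_2$ sums, apply the $GL(3)$ Voronoi formula a \emph{second} time to the resulting $n_2$-sum (exactly as Li does for $j=0$), and then bound the dual integral by the second derivative test. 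The point of the paper over \cite{XLi1} is precisely that this second Voronoi step works uniformly for every $j\le n_0$, so that taking $n_0$ large (depending on $\varepsilon$) pushes the stationary-phase error below $T^{1+\varepsilon}M$ without the new main terms spoiling the bound. Your sketch collapses this to a single secondary step on the leading term, which would reproduce Li's $T^{3/8+\varepsilon}$ threshold rather than the improved $T^{1/3+\varepsilon}$.
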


Note that in \cite{XLi1} the same \eqref{SumInt<<} was proved for
$T^{3/8+\varepsilon}\leq M\leq T^{1/2}$. As pointed out in
\cite{XLi1},
\begin{equation}\label{>=0}
L\Big(\frac12,f\times u_j\Big)
\geq0
\end{equation}
was proved by Lapid \cite{Lpd} because $f$ is orthogonal and
$u_j$ is symplectic (Jacquet and Shalika \cite{JcqtShlk}). 
The nonnegativity in \eqref{>=0} allows us to deduce a bound for 
individual terms from \eqref{SumInt<<}. 

We remark that the normalization of $u_j$ is different from
the normalization $\lambda_{u_j}(1)=1$ as required in the
definition of $L(s,f\times u_j)$, but the discrepancy is within
$t_j^\varepsilon$ as proved in Hoffstein and Lockhart
\cite{HffsLckh}.
The smaller allowable power of $T$ for $M$ in Theorem
\ref{SumInt<<} gives us a smaller subconvexity bound.

\begin{cor}\label{SubBdd}
{\it Let $f$ be a fixed self-contragradient Hecke-Maass form for
$SL(3,\mathbb Z)$ normalized by $A(1,1)=1$, and $u$ an even Hecke-Maass form for
$SL(2,\mathbb Z)$ normalized by $\lambda_{u}(1)=1$. Denote
by $1/4+k^2$, $k>0$, the Laplace eigenvalue of $u$. }
$$
L\Big(\frac12,f\times u\Big)
\ll_{\varepsilon,f}
k^{4/3+\varepsilon}.
$$
\end{cor}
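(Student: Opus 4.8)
The plan is to deduce the pointwise bound from the smoothed second‑moment estimate of Theorem~\ref{SumIntBdd} by positivity, after choosing the free parameters optimally.

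First, since $\{u_j\}$ runs over an orthonormal basis of even Hecke--Maass forms for $SL(2,\mathbb Z)$ and $u$ is itself such a form, $u$ coincides with one basis element $u_{j_0}$ up to a scalar, and its spectral parameter is $t_{j_0}=k$. The Rankin--Selberg central value $L(1/2,f\times u)$ is built from the Hecke eigenvalues of $u$ and is therefore insensitive to the $L^2$-normalization (and even if one tracks a normalization factor, as in the remark above, it is $\ll k^\varepsilon$ by Hoffstein and Lockhart \cite{HffsLckh}); so it suffices to bound $L(1/2,f\times u_{j_0})$.

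Next, apply Theorem~\ref{SumIntBdd} with $T=k$ and $M=k^{1/3+\varepsilon}$. For $k$ large this choice satisfies $T^{1/3+\varepsilon}\le M\le T^{1/2}$, so \eqref{SumInt<<} holds with right‑hand side $\ll_{\varepsilon,f}k^{1+\varepsilon}M=k^{4/3+2\varepsilon}$. On the left‑hand side of \eqref{SumInt<<} the integral is nonnegative and, by \eqref{>=0}, every term of the spectral sum is nonnegative; discarding all of them except the term $j=j_0$, whose Gaussian weight equals $e^{-(t_{j_0}-k)^2/M^2}=1$, we obtain
$$
L\Big(\frac12,f\times u_{j_0}\Big)\ll_{\varepsilon,f}k^{4/3+2\varepsilon}.
$$
Combined with the previous paragraph, and after renaming $\varepsilon$, this yields $L(1/2,f\times u)\ll_{\varepsilon,f}k^{4/3+\varepsilon}$ for large $k$; for $k$ in any bounded range the bound is trivial.

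Since the deduction is itself routine, the real work — and the only genuine obstacle — sits inside Theorem~\ref{SumIntBdd}: pushing the mean‑value bound $T^{1+\varepsilon}M$ down to $M$ as small as $T^{1/3+\varepsilon}$, which improves the range $T^{3/8+\varepsilon}\le M$ of \cite{XLi1} and is exactly what turns X.~Li's exponent into $4/3$. Within the corollary the only point to watch is the comparison of the $L^2$- and Hecke‑normalizations of $u$, handled by the Hoffstein--Lockhart lower bound for $L(1,\mathrm{sym}^2u)$.
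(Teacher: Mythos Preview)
Your deduction is correct and matches the paper's own (implicit) argument: the corollary is obtained from Theorem~\ref{SumIntBdd} by nonnegativity \eqref{>=0}, dropping all but the term $j=j_0$ with $t_{j_0}=k$ and taking $T=k$, $M=T^{1/3+\varepsilon}$, with the normalization discrepancy absorbed by Hoffstein--Lockhart as the paper remarks just before the corollary. There is nothing to add.
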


Note that Corollary \ref{SubBdd} improved the bound
$O\big(k^{11/8+\varepsilon}\big)$ proved in \cite{XLi1}.
The convexity bound is $O\big(k^{3/2+\varepsilon}\big)$.
Because of the nonnegativity \eqref{>=0}, the bound in
\eqref{SumInt<<} implies a square moment bound for
$L(s,f)$ over a short interval.

\begin{cor}\label{moment}
{\it Let $f$ be a fixed self-contragradient Hecke-Maass form for
$SL(3,\mathbb Z)$ normalized by $A(1,1)=1$. Then for
$T^{1/3+\varepsilon}\leq M\leq T^{1/2}$ }
\begin{equation}\label{moment<<}
\int_{-\infty}^\infty
e^{-(t-T)^2/M^2}
\Big|L\Big(\frac12-it,f\Big)\Big|^2dt
\ll_{\varepsilon,f}
T^{1+\varepsilon}M.
\end{equation}
\end{cor}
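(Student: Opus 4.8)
The plan is to obtain Corollary \ref{moment} as an immediate consequence of Theorem \ref{SumIntBdd}, the only extra ingredient being the positivity statement \eqref{>=0}. First I would note that on the left-hand side of \eqref{SumInt<<} the spectral sum over $j$ is a sum of nonnegative quantities: each Gaussian weight $e^{-(t_j-T)^2/M^2}$ is positive, and by Lapid's theorem each central value $L(1/2,f\times u_j)$ is $\geq 0$, as recorded in \eqref{>=0} (this uses that the fixed form $f$ is self-contragredient, hence orthogonal, while each $u_j$ is symplectic by Jacquet--Shalika). Therefore the entire sum $\sum_j e^{-(t_j-T)^2/M^2}L(1/2,f\times u_j)$ is nonnegative.

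Second, since the integral term $\frac1{4\pi}\int_{-\infty}^\infty e^{-(t-T)^2/M^2}|L(1/2-it,f)|^2\,dt$ is itself manifestly nonnegative, I may simply discard the nonnegative spectral sum from the left-hand side of \eqref{SumInt<<} while preserving the inequality. For $T^{1/3+\varepsilon}\leq M\leq T^{1/2}$ this yields
$$\frac1{4\pi}\int_{-\infty}^\infty e^{-(t-T)^2/M^2}\Big|L\Big(\tfrac12-it,f\Big)\Big|^2\,dt\ll_{\varepsilon,f}T^{1+\varepsilon}M,$$
and absorbing the factor $4\pi$ into the implied constant gives precisely \eqref{moment<<}.

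I do not expect any genuine obstacle here, since all of the analytic work — in particular the moment bound over the enlarged range $M\geq T^{1/3+\varepsilon}$ — is already contained in Theorem \ref{SumIntBdd}. The one point that must be invoked explicitly is that every term removed from the left-hand side is truly nonnegative; this is exactly the role of \eqref{>=0}, and it is the reason one recovers a clean bound for the second moment of $L(1/2-it,f)$ alone rather than merely for a linear combination of the two spectral quantities.
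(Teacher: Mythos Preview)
Your proposal is correct and matches the paper's own reasoning: the paper states just before Corollary~\ref{moment} that ``because of the nonnegativity \eqref{>=0}, the bound in \eqref{SumInt<<} implies a square moment bound for $L(s,f)$ over a short interval,'' which is exactly the argument you give. There is nothing further to add.
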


Since $f$ is a $GL(3)$ form, the square moment in
\eqref{moment<<} is comparable to a sixth power moment of
the Riemann zeta function. Similar arguments were carried
out for a $GL(2)$ form in Ye \cite{YeY} and Lau, Liu and Ye
\cite{LauLiuYeY}.

By a standard argument of analytic number theory (cf.
Heath-Brown \cite{HthB} or Ivi\'c \cite{Ivic}, p. 197),
we derived a
subconvexity bound for $L(s,f)$ in the $t$ aspect. Its
improvement over \cite{XLi1}'s
$O\big((1+|t|)^{11/16+\varepsilon}\big)$
is again based on the smaller allowable power of $T$ for $M$.
The convexity bound is $O\big((1+|t|)^{3/4+\varepsilon}\big)$.

\begin{cor}\label{tSubBbb}
{\it Let $f$ be a fixed self-contragradient Hecke-Maass form for
$SL(3,\mathbb Z)$ normalized by $A(1,1)=1$. Then}
$$
L\Big(\frac12+it,f\Big)
\ll_{\varepsilon,f}
(1+|t|)^{2/3+\varepsilon}.
$$
\end{cor}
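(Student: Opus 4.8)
The plan is to deduce the pointwise bound from the short-interval second moment estimate of Corollary~\ref{moment} by the standard passage from mean values to pointwise bounds (cf. Heath-Brown \cite{HthB}, or Ivi\'c \cite{Ivic}, p.~197). We may assume $T:=|t|$ is large, since $L(\cdot,f)$ is entire ($f$ being cuspidal) and the bound is trivial for bounded $t$; and since $f$ is self-contragredient, $|L(\tfrac12+it,f)|=|L(\tfrac12-it,f)|$, so we may take $t=T>0$. First I would open $L(\tfrac12+iT,f)$ by the approximate functional equation in its \emph{single-sum} form: shifting the contour in $\frac1{2\pi i}\int_{(2)}L(\tfrac12+iT+w,f)G(w)Y^{w}\frac{dw}{w}$ far to the left (where $L$ has at most polynomial growth, by the functional equation together with absolute convergence) gives, with $G$ a fixed even function with $G(0)=1$ and rapid vertical decay and $Y=T^{3+\varepsilon}$ (the analytic conductor of a $GL(3)$ $L$-function in the $t$-aspect being $\asymp T^{3}$),
\[
L\Big(\tfrac12+it,f\Big)=\sum_{n\le T^{4}}\frac{A(n,1)}{n^{1/2}}\,V\!\Big(\frac nY\Big)n^{-it}+O(T^{-100})=:Q(t)+O(T^{-100})
\]
uniformly for $t$ in a fixed neighbourhood of $T$, where $V$ is a fixed smooth rapidly decreasing weight with $V(0)=1$. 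The essential point is that $Q$ is a Dirichlet polynomial all of whose frequencies $\log n$ lie in $[0,4\log T]$; thus $Q$ is band-limited with band $B\ll\log T$.

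Next I would apply the elementary inequality $|g(t_{0})|^{2}\ll\delta^{-1}\int_{t_{0}-\delta}^{t_{0}+\delta}|g|^{2}+\delta\int_{t_{0}-\delta}^{t_{0}+\delta}|g'|^{2}$ (valid for $g\in C^{1}$ and $\delta>0$, by writing $g(t_{0})=g(t)-\int_{t_{0}}^{t}g'$, squaring, and averaging in $t$) to $g=Q$, together with the local Bernstein inequality $\int_{t_{0}-\delta}^{t_{0}+\delta}|Q'|^{2}\ll B^{2}\int_{|t-t_{0}|\le 3\delta}|Q|^{2}$ for $\delta\asymp 1/B$: this follows by multiplying $Q$ by a smooth cutoff $\chi$ that equals $1$ on $[t_{0}-2\delta,t_{0}+2\delta]$, is supported on $[t_{0}-3\delta,t_{0}+3\delta]$, and has band $\ll B$, and then invoking Plancherel for $\widehat{Q\chi}=\widehat Q*\widehat\chi$ (concentrated in $|\xi|\ll B$ up to rapidly decreasing tails). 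Choosing $\delta\asymp 1/B$ and combining yields $|Q(T)|^{2}\ll B\int_{|t-T|\le 1}|Q(t)|^{2}\,dt$. Replacing $Q$ by $L(\tfrac12+it,f)$ (error $O(T^{-50})$ after squaring) and applying Corollary~\ref{moment} with $M=T^{1/3+\varepsilon}$, which gives $\int_{|t-T|\le 1}|L(\tfrac12+it,f)|^{2}\,dt\ll_{\varepsilon,f}T^{1+\varepsilon}M=T^{4/3+\varepsilon}$ (using $1\le M$ and $e^{-(t-T)^{2}/M^{2}}\ge e^{-1}$ on $|t-T|\le M$, and self-duality to pass from $|L(\tfrac12-it,f)|^{2}$ to $|L(\tfrac12+it,f)|^{2}$), we obtain $|L(\tfrac12+iT,f)|^{2}\ll\log T\cdot T^{4/3+\varepsilon}$, and taking square roots proves the corollary. (The exponent $2/3$ is $\tfrac12(1+\alpha)$, with $T^{\alpha}$ the least admissible $M$ in Corollary~\ref{moment}; the weaker range $\alpha=3/8$ of \cite{XLi1} would give $11/16$ instead.)

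The step I expect to be delicate is precisely this transition from a mean square to a pointwise value without forfeiting a power of $T$: because $L(\tfrac12+it,f)$ oscillates on the scale $1/\log T$, a crude bound by the second moment over a unit interval is insufficient, and one must check that the single-sum approximate functional equation really does have a negligible remainder with the truncation kept short enough to keep the band $\ll\log T$, and that the interval produced by the local Bernstein step stays of length $\ll M$ so that Corollary~\ref{moment} applies. A variant in the spirit of the classical references replaces the Bernstein step by bounding $\int_{|t-T|\le1}|L'(\tfrac12+it,f)|^{2}\,dt$ through Cauchy's estimates on circles of radius $\asymp 1/\log T$ together with a second moment bound for $L(s,f)$ in the thin strip $|\Re s-\tfrac12|\ll 1/\log T$, the latter obtained from Corollary~\ref{moment}, the functional equation, the classical Rankin--Selberg second moment bound on $\Re s=\tfrac23$, and convexity of $L^{2}$-means in vertical strips.
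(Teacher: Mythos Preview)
Your overall strategy---deduce the pointwise bound from the short second moment of Corollary~\ref{moment} via a Sobolev/Bernstein-type passage---is exactly what the paper intends: the paper gives no argument at all beyond citing ``a standard argument of analytic number theory (cf.\ Heath-Brown \cite{HthB} or Ivi\'c \cite{Ivic}, p.~197).'' So at the level of strategy you are aligned with the paper, and your final computation $|L(\tfrac12+iT,f)|^{2}\ll (\log T)\,T^{4/3+\varepsilon}$ is the right endpoint.

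That said, your primary implementation has a gap in the ``local Bernstein'' step. A compactly supported cutoff $\chi$ cannot have band $\ll B$ in the literal sense, and once you allow Schwartz tails in $\widehat{\chi}$ you must control the tail contribution $\int_{|\xi|>CB}\xi^{2}|\widehat{Q\chi}(\xi)|^{2}\,d\xi$. The crude bound via $\sum_{n}|a_{n}|$ (total variation of $\widehat Q$) gives something of size $T^{3+\varepsilon}$, which swamps the target $T^{4/3+\varepsilon}$; the decay of the tails is only polynomial in $B|\xi|\asymp\log T$, not in $T$. The standard fix---which your closing ``variant'' paragraph essentially points to---is either (i) use the convolution identity $Q=Q*\Phi$ with $\widehat\Phi\equiv 1$ on $[-B,B]$, then split $\int|\Phi(u)||Q(T+u)|^{2}\,du$ into $|u|\le 1$, $1<|u|\le T^{0.9}$, and $|u|>T^{0.9}$, controlling the middle range by extending Corollary~\ref{moment} to intervals of length $U$ via a covering by $O(U/M)$ translates, and the far range by the trivial bound $|Q|\ll\sum_{n}|A(n,1)|n^{-1/2}\ll T^{3/2+\varepsilon}$; or (ii) use the subharmonicity of $|L|^{2}$ on a disc of radius $\asymp 1/\log T$ together with a bound for $\int_{T-1}^{T+1}|L(\sigma+it,f)|^{2}\,dt$ for $|\sigma-\tfrac12|\le 1/\log T$, obtained from Corollary~\ref{moment}, the functional equation, and convexity of $L^{2}$-means in vertical strips. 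Either route completes the argument; the Sobolev-plus-Bernstein shortcut, as you wrote it, does not stand on its own.
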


Following Ye and Deyu Zhang \cite{YeYdyZh}, we can deduce
the following result on zero density for $L(s,f)$ from
\eqref{moment<<}. Let
$$
N_f(\sigma, T, T+T^\delta)
=
\#\big\{\rho=\beta+i\gamma\big|
L(\rho,f)=0, \sigma<\beta<1, T\leq\gamma\leq T+T^\delta
\big\}
$$
be the number of zeros of $L(s,f)$ in the box of
$\sigma<\beta<1$ and $T\leq\gamma\leq T+T^\delta$.

\begin{cor}\label{0density}
{\it Let $f$ be a fixed self-contragradient Hecke-Maass form for
$SL(3,\mathbb Z)$. Then for $1/3<\delta\leq1$, we have}
\begin{eqnarray}\label{0density<<}
N_f(\sigma, T, T+T^\delta)
&\ll_{\varepsilon,f}
T^{\frac{(2+4\delta)(1-\sigma)}{3-2\sigma}+\varepsilon}
&
for\ 1/2\leq\sigma< \frac{2+\delta}{2+2\delta} ;
\\
&\ll_{\varepsilon,f}
T^{2(1+\delta)(1-\sigma)+\varepsilon}
&
for\ \frac{2+\delta}{2+2\delta}\leq\sigma<1.
\nonumber
\end{eqnarray}
\end{cor}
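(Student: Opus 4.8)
The plan is to run the standard Carlson-type zero-detection argument, feeding in the short-interval second moment bound \eqref{moment<<} as the analytic input. First I would recall that a zero $\rho=\beta+i\gamma$ of $L(s,f)$ with $\beta>\sigma$ can be detected by a Dirichlet polynomial: using the approximate functional equation (or a Mellin transform with a smooth cutoff) applied to the $GL(3)$ $L$-function, if $L(\rho,f)=0$ then a sum of the shape $\sum_{n\le N} A(n)\, n^{-\rho} e^{-n/X}$ (with suitable $X$, $N$ a power of the conductor $\asymp(1+|\gamma|)^3$) is $\gg 1$ in absolute value. One then takes a well-spaced subset of the zeros in the box (spacing $\gg 1$ in the $\gamma$-coordinate, losing only a factor $T^{\varepsilon}$ in the count), applies a large-sieve / mean-value estimate for Dirichlet polynomials twisted by the Hecke eigenvalues $A(n)$, and bounds the number of detected zeros by a mean value of $|\sum_n A(n) n^{-1/2-i\gamma}|^{2}$-type object over the $\gamma$'s.

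The key step is to convert that mean value over discrete $\gamma\in[T,T+T^{\delta}]$ into the continuous integral $\int e^{-(t-T)^2/M^2}|L(1/2-it,f)|^2\,dt$ and invoke \eqref{moment<<}. Concretely, one chooses $M$ in the admissible range $T^{1/3+\varepsilon}\le M\le T^{1/2}$; since the spacing of the well-spaced zeros is $\gg1$, the number with $\gamma\in[T,T+T^{\delta}]$ is $\ll T^{\varepsilon}$ times $\int_{T}^{T+T^\delta} (\text{local density})$, and after splitting $[T,T+T^\delta]$ into $\asymp T^\delta/M$ intervals of length $M$ (legitimate precisely because $M\le T^{1/2}\le T^{\delta}$ once $\delta\ge 1/3$, matching the hypothesis $1/3<\delta\le1$ after absorbing $\varepsilon$), each contributes $\ll T^{1+\varepsilon}M$ by \eqref{moment<<}. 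This yields a bound for $N_f(\sigma,T,T+T^\delta)$ of the form $\ll T^{\varepsilon}\big(X^{2(1-\sigma)} + (T^{\delta}/M)\cdot T\, M\cdot X^{1-2\sigma}\big)$ after the reflection/convexity manipulations, where the first term comes from the ``diagonal'' main term of the Dirichlet polynomial and the second from the off-diagonal handled by \eqref{moment<<}; here $X\asymp T^{c}$ is the length of the detecting polynomial, roughly $X\asymp (1+|\gamma|)^{3/2}\asymp T^{3/2}$ up to the usual flexibility. Optimizing $X$ to balance the two terms gives the exponent $\frac{(2+4\delta)(1-\sigma)}{3-2\sigma}$; this is the main term for $\sigma$ not too close to $1$, and for $\sigma$ beyond the crossover point $\frac{2+\delta}{2+2\delta}$ the trivial/convexity contribution $T^{2(1+\delta)(1-\sigma)+\varepsilon}$ takes over, producing the two-range statement.

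I would follow the bookkeeping of Ye and Deyu Zhang \cite{YeYdyZh} essentially verbatim, since that paper carries out exactly this deduction from a short-interval second-moment bound of the form \eqref{moment<<} for a $GL(2)$-type $L$-function; the only change is that the $GL(3)$ functional equation has conductor $(1+|\gamma|)^3$ rather than $(1+|\gamma|)^2$, which is what shifts the polynomial length $X$ and thereby the exponents in \eqref{0density<<}. The two crossover value $\sigma=\frac{2+\delta}{2+2\delta}$ is exactly where $\frac{(2+4\delta)(1-\sigma)}{3-2\sigma}=2(1+\delta)(1-\sigma)$, i.e. where $3-2\sigma = (1+\delta)$, so one verifies the exponents agree there and picks the minimum of the two on each side.

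The main obstacle is not any single estimate but rather tracking the admissible range of the parameters: one must ensure that the chosen $M$ simultaneously lies in $[T^{1/3+\varepsilon},T^{1/2}]$, that $T^{\delta}\ge M$ so the interval $[T,T+T^{\delta}]$ can be tiled by windows of width $M$ (this forces $\delta>1/3$, hence the hypothesis), and that the well-spacing reduction and the passage from the discrete sum over zeros to the Gaussian-weighted integral only cost $T^{\varepsilon}$. Once these compatibility conditions are checked, the rest is the routine Carlson argument plus the optimization of $X$, and \eqref{0density<<} follows.
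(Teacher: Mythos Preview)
Your approach is correct and matches the paper's: the paper simply says to follow Ye and Zhang \cite{YeYdyZh} using the improved moment bound \eqref{moment<<}, and you propose exactly the Carlson-type zero-detection argument with \eqref{moment<<} as input. Two corrections, however. First, \cite{YeYdyZh} already treats the $GL(3)$ case, not $GL(2)$; it uses Li's moment bound (valid for $M\ge T^{3/8+\varepsilon}$) and obtains \eqref{0density<<} for $3/8<\delta\le 1$. No conductor adjustment is needed---you just substitute the stronger range $M\ge T^{1/3+\varepsilon}$ from Corollary~\ref{moment} into their argument verbatim to get $1/3<\delta\le 1$. Second, your justification ``$M\le T^{1/2}\le T^{\delta}$ once $\delta\ge 1/3$'' is wrong ($T^{1/2}\le T^{\delta}$ would need $\delta\ge 1/2$); the correct reasoning is that one takes $M=T^{1/3+\varepsilon}$, the smallest admissible value, and then the tiling condition $M\le T^{\delta}$ is exactly the hypothesis $\delta>1/3$. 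Also, your intermediate claim that the crossover occurs where $3-2\sigma=1+\delta$ is an algebraic slip (that would give $\sigma=(2-\delta)/2$); equating the two exponents correctly yields $3-2\sigma=(1+2\delta)/(1+\delta)$, which does give $\sigma=(2+\delta)/(2+2\delta)$.
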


We note that Corollary \ref{0density} shows that
\eqref{0density<<} is now valid on a shorter interval
$[T,T+T^\delta]$ with
$1/3<\delta\leq1$ than the interval with $3/8<\delta\leq1$ in
\cite{YeYdyZh} which uses Li \cite{XLi1}.

As noted in \cite{XLi1}, Theorem \ref{SumIntBdd} can also be
proved for $f$ being the minimal Eisenstein series on $GL(3)$.
This has been carried out in Lu \cite{QLu}. Our proof and
improvement can also be applied to that case.

P. Sarnak pointed out to us that for a holomorphic cusp 
form $g$ for $SL(2,\mathbb Z)$, the Dirichlet series 
for the $L$-functions $L(s,Sym^2g)$ and $L(s,Sym^2g\times u_j)$ 
have the same structure and properties as $L(s,f)$ and 
$L(s,f\times u_j)$, respectively, for $f$ being a self-dual 
Maass form for $SL(3,\mathbb Z)$ (cf. Bump \cite{Bmp1,Bmp2} 
and Luo and Sarnak \cite{LuoSrnk}). Consequently our 
theorem and corollaries are also valid for such 
$L(s,Sym^2g)$ and $L(s,Sym^2g\times u_j)$.

The main techniques of our proof, other than those used
in \cite{XLi1}, include an asymptotic expansion of 
exponential integrals
\begin{eqnarray} \label{Ige(f)}
&&
\int_{\alpha}^{\beta} g(x)e(f(x))~dx
\end{eqnarray}
when $f'(x)$ changes signs
at a point $x=\gamma$ with $\alpha<\gamma<\beta$.
Huxley \cite{Hxly} obtained the first-order asymptotic
expansion of \eqref{Ige(f)}. His results \cite{Hxly}
are used widely as standard techniques in analytic number
theory and other branches of mathematics.

What we need in our proof, however, is an asymptotic expansion
of \eqref{Ige(f)} beyond the first order.
Blomer, Khan and Young \cite{BlmKhnYng} proved such an
asymptotic expansion for $f(x)$ being smooth and $g(x)$ 
being smooth of compact support. In \cite{McKSunYe} we proved 
a similar asymptotic expansion for $f(x)$ being continuously 
differentiable $2n+3$ times and $g(x)$ being continuously 
differentiable $2n+1$ times on a finite interval 
$[\alpha,\beta]$. Since the latter one is explicitly 
written, we will use it in the present paper:
\begin{eqnarray*}
\int_{\alpha}^{\beta}g(x)e( f(x))~dx
&=&
\frac{e(f(\gamma)\pm1/8)}{\sqrt{|f''(\gamma)|}}
\Big(g(\gamma)+\sum_{j=1}^{n}\varpi_{2j}\frac{(-1)^{j}(2j-1)!!}{(2\pi i f''(\gamma))^j}
\Big)
\\
&&
+  \text{Boundary terms} + \text{Error terms}.
\end{eqnarray*}
Here $\gamma$ is the only zero of $f'(x)$ in $(\alpha,\beta)$, 
and $\varpi_{2j}$ are given in \eqref{varpi-5}. Note that 
the boundary terms do not appear in \cite{BlmKhnYng}. 
See Proposition \ref{WSPI} below for detail. We will
apply Voronoi's summation formula (Lemma \ref{propVoronoi}) 
and its asymptotic expansion (Lemma \ref{Vasymptotics}) to 
the leading term of \eqref{varpi-5} 
for all $\varpi_{2j}$ the second time.

In the following sections, $\varepsilon$ is any arbitrarily
small positive number. Its value may be different on each
occurrence.

\section{Oscillatory integrals}

The following proposition is the weighted first derivative
test, which strengthens Lemma 5.5.5 of \cite{Hxly},
p.113, with more boundary terms and smaller error terms. We 
can also use a similar formula proved in Jutila and Motohashi 
\cite{JutlMoto}, Lemma 6.

\begin{prop}\label{WFDT} (McKee, Sun and Ye \cite{McKSunYe}) 
Let $f(x)$ be a real-valued function, $n+2$ times continuously differentiable for $\alpha\leq x\leq\beta$, and $g(x)$ a 
real-valued function, $n+1$ times continuously differentiable for $\alpha\leq x\leq\beta$. Suppose that there are positive parameters $M$,
$N$, $T$, $U$, with $M\geq\beta-\alpha$, and positive constants $C_r$ such that for $\alpha\leq x\leq\beta$,
\begin{equation*}
|f^{(r)}(x)|\leq C_r\frac{T}{M^r},\ |g^{(s)}(x)|\leq C_s\frac{U}{N^s},
\end{equation*}
for $r=2,\ldots,n+2$, and $s=0,\ldots,n+1$. If $f^\prime(x)$ and $f^{\prime\prime}(x)$ do not change signs on the interval $[\alpha,\beta]$,
then we have
\begin{eqnarray*}
\int_{\alpha}^{\beta}g(x)e(f(x))dx&=&\Big[e(f(x))\sum_{i=1}^{n}H_{i}(x)\Big]_{\alpha}^{\beta}
\nonumber
\\
&&+\label{error1}
O\Big(\frac{M}{N}\sum_{j=1}^{[n/2]}\frac{UT^j
}{\min|f^\prime|^{n+j+1}M^{2j}}\sum_{t=j}^{n-j}\frac{1}{N^{n-j-t}M^t}\Big)
\\
&&+\label{error2}
O\Big(\Big(\frac{M}{N}+1\Big)\frac{U}{N^{n}\min{|f^\prime|^{n+1}}}\Big)
\\
&&+\label{error3}
O\Big(\sum_{j=1}^{n}\frac{UT^j}{\min|f^\prime|^{n+j+1}M^{2j}}\sum_{t=0}^{n-j}\frac{1}{N^{n-j-t}M^{t}}\Big),
\end{eqnarray*}
where
\begin{equation}\label{the2-1}
H_1(x)=\frac{g(x)}{2\pi if^\prime(x)},\ H_i(x)=-\frac{H_{i-1}^\prime(x)}{2\pi if^\prime(x)}
\end{equation}
for $i=2,\ldots,n$.
\end{prop}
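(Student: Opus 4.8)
The plan is to prove Proposition~\ref{WFDT} by repeated integration by parts, in the spirit of the classical first derivative test; the new feature is that one iterates far enough to produce all $n$ boundary terms $H_1,\dots,H_n$ explicitly and to push the remainder down to the size recorded in the three error sums. Since $f'$ does not change sign on $[\alpha,\beta]$ it never vanishes, so $e(f(x))=\frac{1}{2\pi i f'(x)}\frac{d}{dx}e(f(x))$ and we may integrate by parts. Starting from $\int_\alpha^\beta g\,e(f)\,dx=\frac{1}{2\pi i}\int_\alpha^\beta\frac{g}{f'}\,d\!\left(e(f)\right)$, one integration by parts produces the boundary term $\big[H_1(x)e(f(x))\big]_\alpha^\beta$ with $H_1=g/(2\pi i f')$ together with $-\int_\alpha^\beta H_1'(x)e(f(x))\,dx$; iterating $n$ times and using the recursion $H_i=-H_{i-1}'/(2\pi i f')$ of \eqref{the2-1} gives the exact identity
\[
\int_\alpha^\beta g(x)e(f(x))\,dx
=\Big[e(f(x))\sum_{i=1}^{n}H_i(x)\Big]_\alpha^\beta
-\int_\alpha^\beta H_n'(x)e(f(x))\,dx .
\]
Thus the displayed main term is produced with no error, and everything reduces to bounding the tail $\mathcal R_n:=-\int_\alpha^\beta H_n'(x)e(f(x))\,dx$. (One can equally well exploit that $f''$ also keeps its sign, so $f$ is strictly monotone and the substitution $u=f(x)$ turns the integral into $\int\tilde g(u)e(u)\,du$ with $\tilde g=(g/f')\circ f^{-1}$; plain integration by parts then reproduces the same $H_i$ since $H_i=\frac{(-1)^{i-1}}{(2\pi i)^{i}}\big(\frac{1}{f'}\frac{d}{dx}\big)^{i-1}\frac{g}{f'}$.)

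The analytic content is a bookkeeping lemma on the shape of $H_i$ and its derivatives: by induction on $i$, each $H_i$ is a finite linear combination of monomials
\[
\frac{g^{(a)}\,f^{(c_1)}\cdots f^{(c_m)}}{(f')^{\,i+m}},\qquad
c_\ell\ge2,\quad a+\sum_{\ell=1}^{m}(c_\ell-1)=i-1,\quad m\le i-1,
\]
so that $H_n'$ is a combination of such monomials with $a+\sum(c_\ell-1)=n$ and denominator exponent $n+m$. To estimate $\mathcal R_n$ I would integrate by parts once or twice more, monomial by monomial: the first round of boundary contributions assembles into $-\big[H_{n+1}(x)e(f(x))\big]_\alpha^\beta$, the second round contributes a further boundary term of size $\ll|H_{n+2}|$, and the leftover integral is bounded trivially by $(\beta-\alpha)\le M$ times the supremum of its integrand, using $|f'|\ge\min|f'|$ throughout and the monotonicity of $1/f'$ (which follows from $f''$ not changing sign) to keep the powers of $\min|f'|$ sharp. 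Inserting $|f^{(r)}|\le C_rT/M^r$ and $|g^{(s)}|\le C_sU/N^s$, a monomial with $m$ differentiated copies of $f$ and a $g$-derivative of order $a$ contributes at most a constant times
\[
\frac{U\,T^{m}}{N^{a}\,M^{\,m+n-a}\,\min|f'|^{\,n+1+m}} ,
\]
and reindexing by $j=m$ together with $t=n-j-a$ (respectively $t=n-j-a+1$ after the extra integration by parts) sorts these quantities against the three displayed sums: the pure $g$-derivative monomials ($m=0$) give the second sum, the monomials carrying a $g$-derivative together with $f$-derivatives in the range $1\le m\le[n/2]$ give the first sum, and the remaining monomials give the third sum.

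The step I expect to be the real work is this last sorting. Repeatedly differentiating $g/f'$ produces a proliferating family of monomials — in particular ``diagonal'' ones such as $g\,(f'')^{k}/(f')^{2k+1}$ carrying many copies of $f''$ — and one must check that every monomial generated lands in the stated index range of one of the three sums, or is dominated by a term that does; this is where $M\ge\beta-\alpha$ is spent on the extra integrations by parts and the constant sign of $f''$ is used to integrate the $f''$-heavy pieces rather than bound them crudely. Getting the exact index ranges right — the cut-off $j\le[n/2]$ in the first sum and the upper limits $n-j$ of the inner $t$-sums — is the delicate part; once the shape of $H_i$, $H_i'$ is fixed the remainder is routine integration by parts and triangle-inequality estimates. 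For $n=1$ the argument collapses to two integrations by parts and recovers a sharpened form of Lemma~5.5.5 of \cite{Hxly}.
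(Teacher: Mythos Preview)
The paper does not prove Proposition~\ref{WFDT} here; it is quoted from the companion paper \cite{McKSunYe}, so there is no proof in the present article to compare your attempt against.

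That said, your plan is the right one and is almost certainly the one carried out in \cite{McKSunYe}. The exact identity
\[
\int_\alpha^\beta g\,e(f)\,dx
=\Big[e(f)\sum_{i=1}^nH_i\Big]_\alpha^\beta-\int_\alpha^\beta H_n'\,e(f)\,dx
\]
after $n$ integrations by parts is immediate from the recursion \eqref{the2-1}, and your structural description of each $H_i$ as a linear combination of monomials
$g^{(a)}\prod_\ell f^{(c_\ell)}/(f')^{i+m}$ with $c_\ell\ge2$, $a+\sum_\ell(c_\ell-1)=i-1$, $0\le m\le i-1$, is correct and proved by a short induction. One further integration by parts on the remainder, together with the trivial bound $(\beta-\alpha)\sup|H_{n+1}'|\le M\sup|H_{n+1}'|$ on the residual integral, produces terms of exactly the shape $UT^m/(N^aM^{n+m-a}\min|f'|^{n+m+1})$ that populate the three error sums: the $m=0$ monomials $g^{(n)}/(f')^{n+1}$ (from the boundary) and $g^{(n+1)}/(f')^{n+1}$ (from the integral) give the second error; the boundary monomials with $m\ge1$ give the third under your reindexing $j=m$, $t=n-m-a$; and the residual-integral monomials with $m\ge1$ give the first. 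The constant sign of $f''$ is used only to make $1/f'$ monotone, which lets one replace $|f'(x)|$ by $\min|f'|$ at the endpoint where it is attained without loss. As you yourself flag, matching the precise index ranges (in particular the cut-off $j\le[n/2]$ in the first sum) is the only place where genuine care is needed; the rest is routine.
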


The following proposition is for a weighted stationary phase 
integral and sharpens Lemma 5.5.6 of \cite{Hxly}, p.114, 
with main terms up to the $n$th order, more boundary
terms and smaller error terms. In \cite{BlmKhnYng}, 
Proposition 8.2 Blomer, Khan and Young obtained the 
same main terms and the last big-$O$ term as in 
\eqref{thm3}, under the assymptions that $f(x)$ and $g(x)$ 
are smooth and $g(x)$ is compactly supported on $\mathbb R$. 
We may use their version in the present paper. 

\begin{prop}\label{WSPI} (McKee, Sun and Ye \cite{McKSunYe})
Let $f(x)$ be a real-valued function, $2n+3$ times continuously
differentiable for $\alpha\leq x\leq\beta$, and $g(x)$ a
real-valued
function, $2n+1$ times continuously differentiable for $\alpha\leq x\leq\beta$.
Let  $H_k(x)$ be defined as in \eqref{the2-1}.
Assume that there are positive parameters $M$,
$N$, $T$, $U$ with
\begin{equation}\label{M-N}
M\geq\beta-\alpha,
\end{equation}
and positive constants $C_r$ such that for $\alpha\leq x\leq\beta$,
\begin{equation}\label{UfLfUg}
|f^{(r)}(x)|\leq C_r\frac{T}{M^r},
\ |f^{(2)}(x)|\geq \frac{T}{C_2M^2},
\ |g^{(s)}(x)|\leq C_s\frac{U}{N^s},
\end{equation}
for $r=2,\ldots,2n+3$, and $s=0,\ldots,2n+1$. 
Suppose that $f'(x)$ changes signs only at $x=\gamma$, from negative to positive, with $\alpha<\gamma<\beta$. 
Let
\begin{equation*}\label{Deltamin}
\Delta=
\min\Big\{\frac{\log2}{C_2},\frac{1}{C_2^2\max\limits_{2\leq k\leq 2n+3}\{C_k\}}\Big\}.
\end{equation*}
If $T$ is sufficiently large such that 
$T^{\frac{1}{2n+3}}\Delta>1$, we have for $n\geq2$ that
\begin{eqnarray*}\label{thm3}
&&
\int_{\alpha}^{\beta}g(x)e(f(x))dx
\\
&=&
\frac{e\Big(f(\gamma)+\frac{1}{8}\Big)}{\sqrt{f''(\gamma)}}\Big(g(\gamma)+\sum_{j=1}^{n}\varpi_{2j}\frac{(-1)^{j}(2j-1)!!}{(4\pi i\lambda_2)^j}
\Big)
+\Big[e(f(x))\cdot\sum_{i=1}^{n+1}H_{i}(x)\Big]_{\alpha}^{\beta}\nonumber
\\
&&+
O\Big(\frac{UM^{2n+5}}{T^{n+2}N^{n+2}}\Big(\frac{1}{(\gamma-\alpha)^{n+2}}+\frac{1}{(\beta-\gamma)^{n+2}}\Big)\Big)\nonumber
+
O\Big(\frac{UM^{2n+4}}{T^{n+2}}
\Big(\frac{1}{(\gamma-\alpha)^{2n+3}}+\frac{1}{(\beta-\gamma)^{2n+3}}\Big)\Big)\nonumber
\\
&&+O\Big(\frac{UM^{2n+4}}{T^{n+2}N^{2n}}\Big(\frac{1}{(\gamma-\alpha)^{3}}+\frac{1}{(\beta-\gamma)^{3}}\Big)\Big)+
O\Big(\frac{U}{T^{n+1}}\Big(\frac{M^{2n+2}}{N^{2n+1}}+M\Big)\Big),
\nonumber
\end{eqnarray*}
where
$$
\lambda_j=\frac{f^{(j)}(\gamma)}{j!}
\ \text{for}\ j=2,\ldots,2n+2,
\hspace{3mm}
\eta_\ell=\frac{g^{(\ell)}(\gamma)}{\ell!}
\ \text{for}\ \ell=0,\ldots,2n,
$$
and
\begin{equation}\label{varpi-5}
\varpi_k
=\eta_k
+
\sum_{\ell=0}^{k-1}
\eta_\ell
\sum_{j=1}^{k-\ell}
\frac{C_{k\ell j}}{\lambda_{2}^j}
\sum_{{\mbox{\tiny$\begin{array}{c}
3\leq n_1,\ldots,n_j\leq 2n+3\\n_1+\cdots+n_j=k-\ell+2j \end{array}$}}}\lambda_{n_1}\cdots\lambda_{n_j},
\end{equation}
with $C_{k\ell j}$ being some constant coefficients.
\end{prop}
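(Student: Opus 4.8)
\emph{Proof idea.} The plan is to straighten the phase near the stationary point by a change of variables, reduce to a model Fresnel integral, and control everything away from $\gamma$ by the weighted first derivative test of Proposition~\ref{WFDT}. Since $f'$ vanishes only at $\gamma$, from negative to positive, and $|f''(x)|\geq T/(C_2M^2)>0$, the function $f$ is strictly convex on $[\alpha,\beta]$ with minimum at $\gamma$, so $f(x)-f(\gamma)\geq 0$ and I may set
\[
u=u(x)=\operatorname{sgn}(x-\gamma)\sqrt{\frac{f(x)-f(\gamma)}{\lambda_2}},\qquad \lambda_2=\tfrac12 f''(\gamma)>0,
\]
which, by Hadamard's lemma, is of class $C^{2n+1}$, strictly increasing, with $u(\gamma)=0$ and $u'(\gamma)=1$. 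Then
\[
\int_\alpha^\beta g(x)e(f(x))\,dx=e(f(\gamma))\int_{u(\alpha)}^{u(\beta)}G(u)\,e(\lambda_2 u^2)\,du,\qquad G(u)=g(x(u))\,x'(u),
\]
where $x(\cdot)$ is the inverse map and $|u(\alpha)|\asymp\gamma-\alpha$, $|u(\beta)|\asymp\beta-\gamma$. The first task is to make the substitution quantitative: using $|f^{(r)}(x)|\leq C_rT/M^r$, $|f''|\gg T/M^2$, $M\geq\beta-\alpha$ from \eqref{M-N}--\eqref{UfLfUg}, and the hypothesis $T^{1/(2n+3)}\Delta>1$ — which is exactly what forces the quadratic term of the Taylor expansion of $f$ at $\gamma$ to dominate the cubic and higher ones on the scale in play — one obtains $x^{(r)}(u)\ll_r M^{1-r}$ and hence $G^{(r)}(u)\ll_r U/N^r$ for $0\leq r\leq 2n+1$.

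Next I would expand $G(u)=\sum_{k=0}^{2n}\varpi_k u^k+R_{2n}(u)$ with $|R_{2n}(u)|\ll (U/N^{2n+1})|u|^{2n+1}$, the coefficients $\varpi_k$ coming from $G=(g\circ x)\cdot x'$ via Fa\`a di Bruno's formula together with Lagrange inversion for $x(u)$; carrying this out reproduces \eqref{varpi-5}, the denominators $\lambda_2^{j}$ arising from the derivatives of the inverse function, the constraint $n_1+\cdots+n_j=k-\ell+2j$ with $n_i\geq 3$ being the weight count in the inversion, and $\eta_\ell$ tracking the Taylor coefficients of $g$. For each monomial one writes $\int_{u(\alpha)}^{u(\beta)}u^k e(\lambda_2 u^2)\,du=\int_{-\infty}^{\infty}-\int_{-\infty}^{u(\alpha)}-\int_{u(\beta)}^{\infty}$; the odd powers integrate to zero over $\mathbb{R}$ by symmetry, and the even ones give $\int_{-\infty}^{\infty}u^{2j}e(\lambda_2 u^2)\,du=e(1/8)(2j-1)!!\,(4\pi i\lambda_2)^{-j}/\sqrt{f''(\gamma)}$. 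Restoring the factor $e(f(\gamma))$, these assemble into the asserted main term $\frac{e(f(\gamma)+1/8)}{\sqrt{f''(\gamma)}}\bigl(g(\gamma)+\sum_{j=1}^{n}\varpi_{2j}\frac{(-1)^j(2j-1)!!}{(4\pi i\lambda_2)^j}\bigr)$.

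What is left are the boundary terms and the error terms, and I expect this to be the main obstacle. One must return the tail integrals $\int_{-\infty}^{u(\alpha)}$, $\int_{u(\beta)}^{\infty}$ of the monomials, and the remainder integral $\int R_{2n}(u)e(\lambda_2 u^2)\,du$, to the $x$-variable, integrate by parts there repeatedly against $e(f(x))$, and check that the spurious boundary contributions at any interior cut points cancel in pairs, leaving precisely $\bigl[e(f(x))\sum_{i=1}^{n+1}H_i(x)\bigr]_\alpha^\beta$ with $H_i$ as in \eqref{the2-1}; the collapse of the $u$-side boundary terms to these clean $H_i$ rests on the identity $2\lambda_2 u(x)=f'(x)\,x'(u(x))$. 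Equivalently, one may split $[\alpha,\beta]$ near $\gamma$ and apply Proposition~\ref{WFDT} with its order taken to be $n+1$ on the two outer pieces, where $\min|f'|\gg T(\gamma-\alpha)/M^2$ and $T(\beta-\gamma)/M^2$ respectively; the three $O$-terms there then turn $\min|f'|^{-(n+2)}$ and its relatives into the first three error terms of the proposition, while the remainder integral, after $n+1$ integrations by parts each gaining $\lambda_2^{-1}\asymp M^2/T$, contributes the last $O\bigl(UT^{-(n+1)}(M^{2n+2}/N^{2n+1}+M)\bigr)$. The genuinely delicate points are keeping the substitution estimates uniform in $n$, confirming that $T^{1/(2n+3)}\Delta>1$ suffices for them, and tracking the exact powers of $M$, $N$, $T$, $\gamma-\alpha$, $\beta-\gamma$ through all the integrations by parts so that no term exceeds the stated size; once the inversion is set up, the identification of $\varpi_k$ with \eqref{varpi-5} is routine bookkeeping.
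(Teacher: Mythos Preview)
The paper does not actually prove this proposition; it is quoted from the companion paper \cite{McKSunYe} (with the Blomer--Khan--Young version \cite{BlmKhnYng} mentioned as an alternative) and is used only as a black box in Sections~4 and~5. So there is no ``paper's own proof'' to compare your sketch against here.

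That said, your outline is the standard route to such expansions and is essentially correct: straighten the phase by the Morse-type substitution $u=\operatorname{sgn}(x-\gamma)\sqrt{(f(x)-f(\gamma))/\lambda_2}$, Taylor-expand the new amplitude $G(u)=g(x(u))x'(u)$, evaluate the Fresnel moments $\int u^{2j}e(\lambda_2u^2)\,du$ to produce the main term, and control the tails and the Taylor remainder by repeated integration by parts (equivalently, Proposition~\ref{WFDT}) on the two monotone pieces. The identification of the $\varpi_k$ with \eqref{varpi-5} via Lagrange inversion and Fa\`a di Bruno is exactly right, as is the observation that the condition $T^{1/(2n+3)}\Delta>1$ is what keeps the higher Taylor terms of $f$ at $\gamma$ subordinate to the quadratic one on the relevant scale.

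One small slip to watch when you write this out: the claim $G^{(r)}(u)\ll U/N^r$ is not correct as stated. Differentiating $g(x(u))\,x'(u)$ produces, via Fa\`a di Bruno and the bounds $g^{(j)}\ll U/N^j$, $x^{(i)}(u)\ll M^{1-i}$, terms of size $U/(N^jM^{r-j})$ for every $0\le j\le r$, so the correct bound is $G^{(r)}(u)\ll U\sum_{j}N^{-j}M^{-(r-j)}$. Both scales $M$ and $N$ genuinely enter, and this is precisely why the four error terms in the proposition carry mixed powers of $M$ and $N$ rather than powers of $N$ alone. Apart from this, the only real work is the careful bookkeeping you already flag in your last paragraph.
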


\section{Background on automorphic forms}
We will follow the setting and notations in Li \cite{XLi1}.
Recall for $m,n \geq 1$ the Kuznetsov trace formula
(Kuznetsov \cite{Kznt} and Conrey and Iwaniec \cite{CnryIwnc})
\begin{eqnarray}\label{kuznetsov}
&&
{\mathop{\sum\nolimits'}
\limits_{j \geq 1}}
h(t_j) \omega_j \lambda_j(m) \lambda_j(n) +
\frac{1}{4\pi} \int_\mathbb{R} h(t) \omega(t)
\bar{\eta} \Big(m, \frac{1}{2}+it\Big)
\eta\Big(n, \frac{1}{2}+it\Big) ~dt   
\\
&=&
\delta(m,n)\frac{H}{2} + \sum_{c \geq 1} \frac{1}{2c}
\Big\{ S(m,n;c)
H^+\Big(\frac{4\pi\sqrt{mn}}{c}\Big)
+
S(-m,n;c)H^-\Big(\frac{4\pi\sqrt{mn}}{c}\Big)
\Big\}.
\nonumber
\end{eqnarray}
Here $\sum'$ in \eqref{kuznetsov} means we are only summing over 
even Maass forms $u_j$, $\delta(m,n)$ is the Kronecker delta,
\begin{equation}\label{omegaomega}
\omega_j = \frac{4\pi |\rho_j(1)|^2}{\cosh \pi t_j}, ~~~
\omega(t) = 4\pi \frac{|\phi(1,{1}/{2}+it)|^2}{\cosh \pi t},
\end{equation}
\[
H = \frac{2}{\pi} \int_0^\infty h(t) \tanh(\pi t) t~dt, ~~~~
H^+(x) = 2i \int_\mathbb{R} J_{2it}(x) \frac{h(t)t}{\cosh \pi t}
~dt, \]
\[
H^-(x) = \frac{4}{\pi} \int_\mathbb{R} K_{2it}(x) \sinh(\pi t) h(t) t~dt, ~~~~~~~~ \text{and}~~~~~~~~
S(a,b;c) = \sum_{d\bar{d} \equiv  1 (\text{mod}\ c)}
e\Big(\frac{da + \bar{d}b}{c}\Big)
\]
is the standard Kloosterman sum.  Above, $J_\nu$ is the $J$-Bessel function.

We let $f$ be a Maass form of type $\nu = (\nu_1, \nu_2)$ for $SL_3(\mathbb{Z})$ (cf. Goldfeld \cite{Gldf}).
Then $f$ has a Whittaker function expansion
$$
f(z) = \sum_{\pm \Gamma^\infty \backslash SL_2(\mathbb{Z})}
\sum_{m_1 = 1}^\infty \sum_{m_2 \not= 0} \frac{A(m_1, m_2)}{m_1 |m_2|}
W_J \Big( M \Big(
\begin{smallmatrix}
\gamma & 0\\
0 &1
\end{smallmatrix}  \Big)
 z, \nu, \psi_{1,1}  \Big),   \label{fwhittaker}
$$
where $W_J$ is the Jacquet-Whittaker function,
$M = \text{diag}(m_1|m_2|, m_1, 1)$, and $\psi_{1,1}$ is a fixed specific generic character
on the abelianization of the standard unipotent upper triangular subgroup of $SL_3(\mathbb{Z})$.
Put
$\alpha = -\nu_1 - 2\nu_2 +1$, 
$\beta = -\nu_1+\nu_2$, 
$\gamma = 2\nu_1 + \nu_2 -1$. 
These are the Langlands parameters at $\infty$ of $f$.  In the usual way, we put
$$
\tilde{\psi}(s) = \int_0^\infty \psi(x) x^{s-1}~dx
$$
to be the Mellin transform of $\psi$ which we assume is smooth
and compactly supported on $(0, \infty)$.

For $k=0,1$ we define
$$
\Psi_k(x) = \int_{\text{Re} s = \sigma} (\pi^3 x)^{-s}
\frac{\Gamma(\frac{1+s+2k+\alpha}{2}) \Gamma(\frac{1+s+2k+\beta}{2}) \Gamma(\frac{1+s+2k+\gamma}{2})}{\Gamma(\frac{-s-\alpha}{2}) \Gamma(\frac{-s-\beta}{2}) \Gamma(\frac{-s-\gamma}{2})}
\tilde{\psi}(-s-k)~ds.
$$
Here $\sigma$ is taken sufficiently large depending on $\alpha, \beta, \gamma$.
We then define, for $k=0,1$,
\begin{equation}\label{Psi0+1}
\Psi_{0,1}^k(x) = \Psi_0(x) + (-1)^k \frac{1}{x \pi^3 i}\Psi_1(x).
%\Psi_{0,1}^1(x) = \Psi_0(x) - \frac{1}{x \pi^3 i}\Psi_1(x).
\end{equation}
Then the following is a crucial tool, the Voronoi formula for
$GL(3)$.

\begin{lemma}(\cite{MllrSchm})   \label{propVoronoi}
{\it Let $\psi \in C_c^\infty(0,\infty)$.  Let $f$ be a $SL_3(\mathbb{Z})$ Maass form with corresponding
Fourier coefficients $A(m,n)$ as in (\ref{fwhittaker}).  Let $d, \bar{d}, c \in \mathbb{Z}$ with $c\not=0$,
$(d,c) = 1$, and $d\bar{d}\equiv 1 (\!\!\!\mod c)$. Then}
\begin{eqnarray}\label{Voronoidef}
\sum_{n>0} A(m,n) e\Big(\frac{n\bar{d}}{c}\Big)\psi(n)
&=&
\frac{c}{4\pi^{{5}/{2}}i}
\sum_{n_1|cm}
\sum_{n_2>0} \frac{A(n_2, n_1)}{n_1 n_2}
S\Big(md, n_2; \frac{mc}{n_1}\Big)
\Psi_{0,1}^0\Big(\frac{n_1^2 n_2}{c^3 m}\Big)
\\
&&+
\frac{c}{4\pi^{{5}/{2}}i}
\sum_{n_1|cm}
\sum_{n_2>0} \frac{A(n_1, n_2)}{n_1 n_2}
S\Big(md, -n_2; \frac{mc}{n_1}\Big)
\Psi_{0,1}^1\Big(\frac{n_1^2 n_2}{c^3 m}\Big).
\nonumber
\end{eqnarray}
\end{lemma}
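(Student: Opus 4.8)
The plan is to derive this (known) formula from Mellin inversion together with the functional equation of the additively twisted $L$-function attached to $f$; this is essentially the route of Goldfeld and Li, and an alternative is the automorphic-distribution method of Miller and Schmid \cite{MllrSchm}. First I would record that, for $\psi\in C_c^\infty(0,\infty)$ and $\mathrm{Re}\,s=\sigma>1$,
\[
\sum_{n>0}A(m,n)e\Big(\frac{n\bar d}{c}\Big)\psi(n)
=\frac{1}{2\pi i}\int_{(\sigma)}\widetilde\psi(s)\,D_m\Big(s,\frac{\bar d}{c}\Big)\,ds,
\qquad
D_m\Big(s,\frac{\bar d}{c}\Big):=\sum_{n>0}\frac{A(m,n)}{n^{s}}\,e\Big(\frac{n\bar d}{c}\Big),
\]
the Dirichlet series converging absolutely for $\sigma>1$ by the Rankin--Selberg bound on $\sum_{n\le X}|A(m,n)|^{2}$, and $\widetilde\psi$ being entire and rapidly decreasing on vertical lines because $\psi$ has compact support in $(0,\infty)$. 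Everything then reduces to the analytic continuation and functional equation of $D_m(s,\bar d/c)$.

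For the functional equation I would expand the additive character $e(n\bar d/c)$ over Dirichlet characters modulo $c$ and its divisors and use the Hecke multiplicativity of the $A(m,n)$, so that $D_m(s,\bar d/c)$ becomes a finite $\mathbb C$-linear combination of twisted $L$-functions $L(s,f\otimes\overline\chi)$ times ramified Euler factors at the primes dividing $c$. Since $f$ is cuspidal, every such $L$-function---including the one attached to the principal character---is entire, so $\widetilde\psi(s)D_m(s,\bar d/c)$ is entire and the contour above may be moved freely. Each $L(s,f\otimes\overline\chi)$ completes to $\Lambda(s,f\otimes\overline\chi)=q^{3s/2}G_\infty(s)L(s,f\otimes\overline\chi)$, with $q$ the conductor of $\chi$ and $G_\infty(s)$ the archimedean factor built from $\Gamma_{\mathbb R}$ at the shifts $\alpha,\beta,\gamma$, and satisfies $\Lambda(s,f\otimes\overline\chi)=\varepsilon(\chi)\,\Lambda(1-s,\widetilde f\otimes\chi)$ with $\varepsilon(\chi)$ proportional to $\tau(\chi)^{3}/q^{3/2}$ (cf. Goldfeld \cite{Gldf}). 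I would then Fourier-invert from the multiplicative back to the additive side in the $\widetilde f$-variable: the cube of the Gauss sum in $\varepsilon(\chi)$, resummed over the characters and merged with the ramified local factors, collapses by the usual Gauss-sum--to--Kloosterman-sum identity into the sums $S(md,\pm n_2;mc/n_1)$, and the divisor condition $n_1\mid cm$ emerges from tracking the conductors of the $\chi$ and the local factors at primes dividing $c$. The two sign choices $\pm n_2$ correspond to twisting by even versus odd $\chi$; this dichotomy is exactly what supplies the two gamma shifts $k=0,1$ appearing in $\Psi_k$, and, since $\widetilde f$ has Fourier coefficients $\widetilde A(a,b)=A(b,a)$, the two families carry the dual coefficients in the two orders $A(n_2,n_1)$ and $A(n_1,n_2)$ seen on the right-hand side.

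Finally I would insert the functional equation into the contour integral, shift the line of integration to the left past $\mathrm{Re}\,s=0$ into the region where the dual Dirichlet series converges absolutely (legitimate since $\widetilde\psi\,D_m$ is entire and rapidly decaying), and interchange the $n_2$-sum with the $s$-integral. For each $n_2$ the remaining integral, after the substitution $s\mapsto -s$, has the shape
\[
\int_{(\sigma')}(\pi^{3}x)^{-s}\,
\frac{\Gamma(\frac{1+s+2k+\alpha}{2})\Gamma(\frac{1+s+2k+\beta}{2})\Gamma(\frac{1+s+2k+\gamma}{2})}{\Gamma(\frac{-s-\alpha}{2})\Gamma(\frac{-s-\beta}{2})\Gamma(\frac{-s-\gamma}{2})}\,
\widetilde\psi(-s-k)\,ds
\]
with $x=n_1^{2}n_2/(c^{3}m)$: for the $k=0$ parity this is exactly $\Psi_0(x)$, while the $k=1$ parity, after one use of the duplication formula on the gamma quotient, produces precisely the extra term $\frac{1}{x\pi^{3}i}\Psi_1(x)$, so the two parities reassemble into $\Psi_{0,1}^{0}$ and $\Psi_{0,1}^{1}$. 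Matching the powers of $\pi$ in $G_\infty$ against the $(\pi^{3}x)^{-s}$ of the definition, together with the conductor power $q^{3s/2}$ and the $2\pi i$ from Mellin inversion, pins down the overall constant $c/(4\pi^{5/2}i)$. I expect the main obstacle to lie in the middle step: converting the character sum with its cubed Gauss sums, the Hecke relations and the ramified local factors at primes dividing $c$ into Kloosterman sums with precisely the stated divisor conditions $n_1\mid cm$---this is transparent when $m=1$ but needs genuine care for general $m$. If the bookkeeping becomes unwieldy, the fallback is the automorphic-distribution approach of \cite{MllrSchm}, where the formula is read off from the $SL_2(\mathbb Z)$-covariance of the boundary distribution attached to $f$ under the long Weyl element, a framework that absorbs this combinatorics automatically.
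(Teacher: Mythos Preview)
The paper does not prove this lemma at all; it is stated with a citation to Miller and Schmid \cite{MllrSchm} and used as a black box throughout. So there is no ``paper's own proof'' to compare against---the paper simply imports the result.

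Your proposal sketches the Goldfeld--Li route (Mellin inversion, expansion of the additive character into Dirichlet characters, functional equation of the twisted $L$-functions, then re-collapse into Kloosterman sums), which is a legitimate and well-documented alternative to the automorphic-distribution method of \cite{MllrSchm} that the paper cites. The outline is sound and the identification of the archimedean integral with $\Psi_0$, $\Psi_1$ is correct. You are also right to flag the middle step as the delicate one: for general $m$, turning the cubed Gauss sums and ramified Euler factors into $S(md,\pm n_2;mc/n_1)$ with the divisor condition $n_1\mid cm$ requires tracking the Hecke relations $A(m,n)=\sum_{d\mid(m,n)}\mu(d)A(m/d,1)A(1,n/d)$ against the conductor decomposition, and this is where published accounts (e.g.\ Goldfeld--Li) spend most of their effort. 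Your sketch does not actually carry this out, and ``Fourier-invert from the multiplicative back to the additive side'' hides a genuine computation; but since you correctly identify both the difficulty and the fallback to \cite{MllrSchm}, the proposal is an honest plan rather than a gap. The Miller--Schmid approach avoids this combinatorics by working directly with the $SL_3$-automorphic distribution restricted to the big cell, where the Kloosterman sums and divisor conditions arise geometrically from the Bruhat decomposition rather than from reassembling character sums.
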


To use this formula, asymptotics of $\Psi_0, \Psi_1$ are needed
which were proved in Li \cite{XLi0} and Ren and Ye
\cite{XRenYeY} for $GL(3)$. (For $GL(m)$ see Ren and Ye \cite{XRenYeY2}.)
Since $x^{-1}\Psi_1(x)$ has similar asymptotics to
$\Psi_0$, following \cite{XLi1}, we only deal with $\Psi_0$.
We will use the following Lemma (\cite{XLi0}):

\begin{lemma}\label{Vasymptotics}
Suppose $\psi \in C_c^\infty([X,2X])$.  Then for any fixed integer $K\geq 1$ and $xX \gg 1$ we have   
\[
\Psi_0(x) = 2\pi^3 x i \int_0^\infty \psi(y)
\sum_{j=1}^K \frac{c_j\cos(6\pi (xy)^{{1}/{3}}) + d_j \sin(6\pi (xy)^{{1}/{3}})}{(xy)^{{j}/{3}}}~dy
+O((xX)^{\frac{2-K}{3}}).
\]
Here $c_j$ and $d_j$ are constants depending on the Langlands parameters with $c_1=0$ and $d_1= -{2}/{\sqrt{3\pi}}$.
\end{lemma}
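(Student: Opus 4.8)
The plan is to turn $\Psi_0(x)$ into a one-dimensional oscillatory integral and then combine Stirling's formula with the weighted stationary phase method of Proposition~\ref{WSPI}. First I would substitute the Mellin transform $\tilde\psi(-s)=\int_0^\infty\psi(y)\,y^{-s-1}\,dy$ into the contour integral defining $\Psi_0(x)$ and interchange the two integrations; this is legitimate because $\tilde\psi(\sigma+it)$ decays faster than any power of $|t|$ on the line $\mathrm{Re}\,s=\sigma$, while the quotient $G(s)$ of the three numerator Gamma factors by the three denominator Gamma factors grows there only polynomially (one may also move $\sigma$ to any fixed value right of all poles of $G$). Writing $z=\pi^3xy$, which satisfies $z\asymp xX$ on $\mathrm{supp}\,\psi$, the lemma reduces to the one-variable statement that for $z\gg1$
\[
I(z):=\frac1{2\pi i}\int_{(\sigma)}z^{-s}G(s)\,ds
=\sum_{j=1}^{K}z^{1-j/3}\Bigl(c_j'\cos\bigl(6z^{1/3}\bigr)+d_j'\sin\bigl(6z^{1/3}\bigr)\Bigr)+O\!\left(z^{(2-K)/3}\right),
\]
with constants $c_j',d_j'$ depending only on $\alpha,\beta,\gamma$; feeding this back into $\Psi_0(x)=\int_0^\infty\psi(y)\,y^{-1}I(\pi^3xy)\,dy$ and collecting the powers of $\pi$ and $y$ gives exactly the stated main term (with the prefactor $2\pi^3xi$), and the error becomes $O\!\left((xX)^{(2-K)/3}\right)$ since $\psi$ is bounded and supported on an interval of length $\asymp X$.

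To prove the displayed claim I would apply Stirling's asymptotic series to the six Gamma factors of $G$. On $s=\sigma+it$ the factors $e^{-\pi|t|/4}$ coming from numerator and denominator cancel, leaving $|G(\sigma+it)|\asymp|t/2|^{3\sigma+3/2}$ and, more precisely, an asymptotic expansion
\[
z^{-s}G(s)=C\,z^{-\sigma}\,\bigl|\tfrac t2\bigr|^{3\sigma+3/2}\,e^{\,i\phi(t)}\Bigl(1+\sum_{\ell=1}^{K-1}\frac{b_\ell}{(it)^{\ell}}+O\bigl(|t|^{-K}\bigr)\Bigr)+(\text{conjugate terms}),
\]
with total phase $\phi(t)=-t\log z+3t\log|t/2|-3t$; the cancellation of the $\pi^3$'s and the identity $\alpha+\beta+\gamma=0$ are used at this point. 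The phase is odd, $\phi'(t)=-\log z+3\log|t/2|$, so its only stationary points are $t=\pm t_0$ with $t_0=2z^{1/3}$, where $\phi(\pm t_0)=\mp6z^{1/3}$ and $\phi''(\pm t_0)=\pm3/t_0\asymp z^{-1/3}$.

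I would then split the $t$-integral. On the ranges where $|t|$ is not comparable to $t_0$ one has $|\phi'(t)|\gg1$, and repeated integration by parts on a dyadic decomposition --- equivalently the first derivative test of Proposition~\ref{WFDT}, applied a number of times depending on $K$ and $\sigma$ --- bounds the contribution by $O\!\left(z^{(2-K)/3}\right)$. On a fixed dilate of $[t_0/2,2t_0]$ and on its reflection the hypotheses of Proposition~\ref{WSPI} (and of the companion formula for the opposite sign of $\phi''$) hold with $M,N,T\asymp z^{1/3}$ and $U\asymp z^{1/2}$; applying it to order $n\geq\max(2,K)$ yields a leading term $\asymp z^{2/3}e^{-6iz^{1/3}}$, while the reflected stationary point contributes its complex conjugate $\asymp z^{2/3}e^{6iz^{1/3}}$. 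These combine into a $\cos(6z^{1/3})$, $\sin(6z^{1/3})$ pair, and since $z^{1/3}=\pi(xy)^{1/3}$ these are precisely the $\cos(6\pi(xy)^{1/3})$ and $\sin(6\pi(xy)^{1/3})$ of the lemma; each further term of the combined Stirling and stationary phase expansions carries one extra factor $t_0^{-1}\asymp z^{-1/3}$, producing the series $\sum_j z^{1-j/3}\bigl(c_j'\cos(6z^{1/3})+d_j'\sin(6z^{1/3})\bigr)$. All main terms past the $K$th, all error terms of Proposition~\ref{WSPI}, and the boundary terms (which telescope against the non-stationary pieces) are $O\!\left(z^{(2-K)/3}\right)$, and a direct evaluation of the leading stationary phase constant gives $c_1=0$ and $d_1=-2/\sqrt{3\pi}$ after the change of normalization. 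As an alternative to the Stirling step one may instead use the reflection and duplication formulas to write $G(s)=c\,8^{-s}\prod_{\mu}\Gamma(1+s+\mu)\prod_{\mu}\sin\tfrac{\pi(s+\mu)}2$, expand the sines into exponentials, and identify $I(z)$ with a finite combination of Meijer $G^{3,0}_{0,3}$ integrals in the variables $W=8z\,e^{\pm i\pi/2}$ and $W=8z\,e^{\pm3i\pi/2}$; the classical saddle point expansion of $G^{3,0}_{0,3}$ then furnishes oscillatory main terms $e^{-3W^{1/3}}=e^{\pm6iz^{1/3}}$ from the branches with $W^{1/3}=\mp2iz^{1/3}$, the remaining two branches being exponentially small and hence part of the error.

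The main obstacle is the stationary phase step. Since the amplitude $|t|^{3\sigma+3/2}$ grows polynomially, the $t$-integral is only conditionally convergent, so the non-stationary ranges must be controlled uniformly in $z$, and one must carry enough terms of \emph{both} the Stirling expansion of $G$ and the Proposition~\ref{WSPI} expansion so that their combined tail is genuinely $O(z^{(2-K)/3})$ for every fixed $K$; this is where the explicit $n$th-order expansions of the present paper are convenient. Pinning down the normalizing constants --- in particular verifying $c_1=0$ --- is the other point that needs care, but it is a finite computation.
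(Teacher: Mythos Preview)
The paper does not prove this lemma at all: it is quoted verbatim from Li \cite{XLi0} (see the sentence ``We will use the following Lemma (\cite{XLi0})'' and the preceding remark that the asymptotics ``were proved in Li \cite{XLi0} and Ren and Ye \cite{XRenYeY}''). There is therefore no in-paper argument to compare your proposal against.

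That said, your sketch is essentially the standard route taken in those references: write $\Psi_0(x)=\int_0^\infty\psi(y)\,y^{-1}I(\pi^3xy)\,dy$, apply Stirling to the Gamma quotient $G(s)$ on a vertical line, and evaluate the resulting oscillatory $t$-integral by stationary phase at $t_0=2z^{1/3}$. Your phase computation $\phi(t)=-t\log z+3t\log|t/2|-3t$, $\phi(\pm t_0)=\mp6z^{1/3}$, $\phi''(\pm t_0)=\pm3/t_0$ is correct, as is the identification of the stationary-phase scale parameters and the resulting $z^{1-j/3}$ hierarchy. The alternative you mention via reflection/duplication and Meijer $G^{3,0}_{0,3}$ is exactly what Ren and Ye \cite{XRenYeY} do. One small caution: invoking Proposition~\ref{WSPI} here is slightly circular in spirit, since the point of the present paper is to \emph{use} Lemma~\ref{Vasymptotics} as input; the cited proofs in \cite{XLi0,XRenYeY} rely only on the classical (first-order) stationary phase or on Barnes-type asymptotics for hypergeometric/Meijer functions, which already suffice to produce the full $K$-term expansion with the stated error. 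Your handling of the conditionally convergent tail and the constant $c_1=0$, $d_1=-2/\sqrt{3\pi}$ is the genuinely fiddly part, and you have flagged it correctly.
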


We now assume $f$ is a self-dual Hecke-Maass form for $SL_3(\mathbb{Z})$
of type $(\nu, \nu)$, normalized so that
$A(1,1) = 1$.  The Rankin-Selberg $L$-function of $f$ with itself is then defined by
\[
L(s, f \times f) = \sum_{m \geq 1} \sum_{n \geq 1} \frac{|A(m,n)|^2}{(m^2 n)^s}
\]
for $\text{Re} s$ large.  $L(s, f \times f)$ has meromorphic continuation to the complex plane, with a simple pole
at $s = 1$.  By a standard analytic number theory argument using complex analysis, this gives
$$
\sum_{m^2 n \leq N} |A(m,n)|^2 \ll_f N.
$$
Applying Cauchy-Schwartz, this gives
\begin{equation}
\sum_{n \leq N} |A(m,n)| \ll_f |m|N.    \label{Asumbyparts}
\end{equation}
We will use (\ref{Asumbyparts}) and summation by parts in the estimates below. Here $f$ being self-dual
also means $A(m,n) = A(n,m)$ for all $m, n$.

The Rankin-Selberg $L$-function of $f$ with $u_j$ is
(for $\text{Re} s$ sufficiently large)
$$
L(s, f \times u_j) = \sum_{m \geq 1} \sum_{n \geq 1} \frac{\lambda_j(n) A(m,n)}{(m^2 n)^s}  .
$$
$L(s, f \times u_j)$ can be completed to $\Lambda(s, f \times u_j)$
with six $\Gamma$ factors at $\infty$ (involving the Langlands
parameters of $f$, and $t_j$).
%See (2.8) and (2.9) of \cite{XLi1}.
%$\Lambda(s, f \times u_j)$ is entire and satisfies a self-dual functional equation with $s \to 1-s$.

We now need to define the Rankin-Selberg $L$-function of $f$ with the Eisenstein series.
See Li \cite{XLi1} for the definition of $E(z,s)$ and $\eta(n,s)$.
%Using (\ref{eisenstein}) and (\ref{etastar}) we can define, for $E = E(z, {1}/{2}+it)$
$$
L(s, f \times E) = \sum_{m \geq 1} \sum_{n \geq 1} \frac{\bar{\eta}(n, {1}/{2}+it) A(m,n)}{(m^2 n)^s}.
$$
Following Goldfeld \cite{Gldf}, comparing Euler products, we have
$$
L\Big(\frac{1}{2}, f\times E\Big) =
\Big| L\Big(\frac{1}{2}-it, f\Big)  \Big|^2.
$$

We need to set up the approximate functional equation.  We define
\begin{eqnarray*}
\gamma(s,t)
&=&
\pi^{-3s}
\Gamma\Big(\frac{s-it-\alpha}{2}\Big)
\Gamma\Big(\frac{s-it-\beta}{2}\Big)
\Gamma\Big(\frac{s-it-\gamma}{2}\Big)
\\
&&\times
\Gamma\Big(\frac{s+it-\alpha}{2}\Big)
\Gamma\Big(\frac{s+it-\beta}{2}\Big)
\Gamma\Big(\frac{s+it-\gamma}{2}\Big).
\end{eqnarray*}
Here $\alpha = -3\nu+1$, $\beta = 0$, and $\gamma = 3\nu -1$ are the Langlands parameters at $\infty$ of $f$.
We define $F(u) = \big(\cos(\pi u/A)\big)^{-3A}$ 
for $A$ a positive integer.
For $|\text{Im} t| \leq 1000$ we now define
\begin{equation}\label{V(yt)}
V(y,t) = \frac{1}{2\pi i}
\int_{(1000)} y^{-u} F(u)
\frac{\gamma({1}/{2}+u,t)}{\gamma({1}/{2},t)} ~\frac{du}{u}  .
\end{equation}
By known bounds for the Langlands parameters, this integral converges.  We have the important
approximate functional equation (cf. \cite{XLi1}):

\begin{lemma}
For $f$ a self-dual Maass form of type $(\nu, \nu)$ for $SL_3(\mathbb{Z})$ and $u_j$ a Hecke-Maass form
for $SL_2(\mathbb{Z})$ corresponding to the eigenvalue
${1}/{4}+t_j^2$ in an orthonormal basis, as above,
\begin{equation}
L\Big(\frac{1}{2}, f \times u_j\Big)
=
2\sum_{m \geq 1} \sum_{n \geq 1}
\frac{\lambda_j(n) A(m,n)}{\sqrt{m^2 n}}
V(m^2n, t_j).
\label{approximatefe}
\end{equation}
\end{lemma}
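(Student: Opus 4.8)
The plan is the classical derivation of an approximate functional equation from the completed functional equation, so I will be brief. First I would assemble the analytic inputs already available in \cite{XLi1}: the completed Rankin--Selberg $L$-function $\Lambda(s,f\times u_j)=\gamma(s,t_j)L(s,f\times u_j)$ is entire (both $f$ and $u_j$ are cuspidal), it is of polynomial growth in vertical strips (Stirling for the six $\Gamma$ factors together with the convexity bound for $L(s,f\times u_j)$), and, because $f$ is self-dual and $u_j$ is even, it satisfies $\Lambda(s,f\times u_j)=\Lambda(1-s,f\times u_j)$ with root number $+1$. I would also record the two elementary properties of the weight $F(u)=(\cos(\pi u/A))^{-3A}$ used below: it is even with $F(0)=1$, and, for $A$ chosen large enough (any integer $A>2000$ works), it is holomorphic in the strip $|\mathrm{Re}\,u|\le1000$ and decays exponentially there as $|\mathrm{Im}\,u|\to\infty$.

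Next I would introduce
$$
I:=\frac{1}{2\pi i}\int_{(1000)}\frac{\gamma(\tfrac12+u,t_j)}{\gamma(\tfrac12,t_j)}\,L\Big(\tfrac12+u,f\times u_j\Big)\,F(u)\,\frac{du}{u}
=\frac{1}{2\pi i}\int_{(1000)}\frac{\Lambda(\tfrac12+u,f\times u_j)}{\gamma(\tfrac12,t_j)}\,\frac{F(u)}{u}\,du
$$
and evaluate it in two ways. On the line $\mathrm{Re}\,u=1000$ we have $\mathrm{Re}(\tfrac12+u)>1$, so the Dirichlet series for $L(\tfrac12+u,f\times u_j)$ converges absolutely; using the exponential decay of $F$ to justify interchanging the summation and the integration, and then recognising the inner integral as $V(m^2n,t_j)$ via the definition \eqref{V(yt)}, I obtain $I=\sum_{m,n\ge1}\lambda_j(n)A(m,n)(m^2n)^{-1/2}V(m^2n,t_j)$, which is the right-hand side of \eqref{approximatefe} divided by $2$.

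For the second evaluation I would shift the contour from $\mathrm{Re}\,u=1000$ to $\mathrm{Re}\,u=-1000$. In this strip the integrand has only the simple pole at $u=0$ (since $\Lambda$ is entire and $F$ is holomorphic there), with residue $\Lambda(\tfrac12,f\times u_j)/\gamma(\tfrac12,t_j)=L(\tfrac12,f\times u_j)$ because $F(0)=1$; the horizontal connecting segments vanish in the limit because $F$ decays exponentially while $\Lambda(\tfrac12+u,f\times u_j)$ grows only polynomially. By the residue theorem $I$ therefore equals $L(\tfrac12,f\times u_j)$ plus the same integral taken over the line $\mathrm{Re}\,u=-1000$; in that integral I would substitute $u\mapsto-u$ and use $\Lambda(\tfrac12-u,f\times u_j)=\Lambda(\tfrac12+u,f\times u_j)$, $F(-u)=F(u)$, together with the fact that this substitution reverses the orientation of the contour, to see that it equals $-I$. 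Hence $I=L(\tfrac12,f\times u_j)-I$, i.e. $L(\tfrac12,f\times u_j)=2I$, and comparing the two evaluations of $I$ yields \eqref{approximatefe}.

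I do not expect a real obstacle here: the only points that require care are the analytic inputs quoted in the first paragraph — that the root number is exactly $+1$ (which is where the evenness of $u_j$ and the self-duality of $f$ enter) and that $\Lambda(\tfrac12+u,f\times u_j)$ is entire and of polynomial growth in vertical strips (cuspidality of both forms plus a convexity estimate) — and the routine bookkeeping of keeping $A$ large enough that the poles of $F$ at $\mathrm{Re}\,u\in A/2+A\mathbb Z$ stay out of the strip $|\mathrm{Re}\,u|\le1000$. All of this is classical and is already used in \cite{XLi1}.
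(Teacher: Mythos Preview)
Your argument is the standard derivation of the approximate functional equation and is correct. The paper itself does not give a proof of this lemma at all: it simply states the result and refers to Li \cite{XLi1} with ``cf.\ \cite{XLi1}'', so there is nothing to compare against beyond noting that your write-up supplies exactly the classical contour-shift argument that underlies the cited reference.
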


The point of using $V$ in the expansion (\ref{approximatefe}) is that $V$ decays rapidly for
$m^2n \gg |t_j|^{3+\varepsilon}$, and so in an effective way, we can take both sums above to be finite.
For the precise decay rate,   see Lemma 2.3 of Li \cite{XLi1}.
We also have the approximate functional equation for
$L(s, f\times E)$:
\begin{equation}
L\Big(\frac{1}{2}, f \times E\Big)
=
2 \sum_{m \geq 1} \sum_{n \geq 1}
\frac{\eta(n,{1}/{2} + it)A(m,n)}{\sqrt{m^2 n}} V(m^2n, t).
\label{fEapprozfe}
\end{equation}

Following Li \cite{XLi1} we now define
$$
W =
{\mathop{\sum\nolimits'}
\limits_{j}}
e^{-(\frac{t_j-T}{M})^2} \omega_j
L\Big(\frac{1}{2}, f \times u_j\Big)
+ \frac{1}{4\pi} \int_\mathbb{R}
e^{-(\frac{t-T}{M})^2} \omega(t)
\Big| L\Big(\frac{1}{2}-it,f\Big)  \Big|^2  ~dt.
$$
Here $\omega_j$ and $\omega(t)$ are defined in 
\eqref{omegaomega}.
It is known that $\omega_j \gg t_j^{-\varepsilon}$ and $\omega(t) \gg t^{-\varepsilon}$.
See the references in Li \cite{XLi1}. 
It follows that
\[
{\mathop{\sum\nolimits'}
\limits_{j}}
e^{-(\frac{t_j-T}{M})^2}
L\Big(\frac{1}{2}, f \times u_j\Big)
+
\frac{1}{4\pi} \int_\mathbb{R}
e^{-(\frac{t-T}{M})^2}
\Big| L\Big(\frac{1}{2}-it,f\Big)  \Big|^2  ~dt \ll W T^\varepsilon .
\]
Consequently, Theorem \ref{SumIntBdd} will be proved if we show 
$W \ll_{\varepsilon, f} T^{1+\varepsilon}M$.
As Li \cite{XLi1} points out, the function $e^{-(\frac{t-T}{M})^2}$ cannot be used as a test function
in the Kuznetsov trace formula simply
because it is not even.
%(Holomorphy in the strip and fast decay rate are satisfied.)
Following Li \cite{XLi1} we will use the modified function
\begin{equation}
k(t) = e^{-(\frac{t-T}{M})^2} + e^{-(\frac{t+T}{M})^2}   \label{equationk}
\end{equation}
which essentially captures the size of $e^{-(\frac{t-T}{M})^2}$ for $t$ near $T$.
Thus, we define
\begin{equation}
\mathcal{W} =
{\mathop{\sum\nolimits'}
\limits_{j}}
k(t_j) \omega_j L\Big(\frac{1}{2}, f\times u_j\Big)
+ \frac{1}{4\pi}
\int_\mathbb{R} k(t) \omega(t)
\Big|  L\Big(\frac{1}{2} - it, f\Big)  \Big|^2 ~dt.
\label{equationWprime}
\end{equation}

By plugging (\ref{approximatefe}) and (\ref{fEapprozfe}) into $\mathcal{W}$ in (\ref{equationWprime})
we see that we need to analyze
$\mathcal{R}$ which we define by the equation
\begin{eqnarray} \label{equationRVN}
\mathcal{R}
&=&
2{\mathop{\sum\nolimits'}
\limits_{j}}
k(t_j) \omega_j \sum_{m \geq 1} \sum_{n \geq 1} \frac{\lambda_j(n) A(m,n)}{\sqrt{m^2 n}} V(m^2n, t_j)
g\Big(\frac{m^2 n}{N}\Big)  
\\
&&+
\frac{1}{2\pi} \int_\mathbb{R} k(t) \omega(t)
\sum_{m \geq 1}
\sum_{n \geq 1}
\frac{\eta(n,{1}/{2} + it)A(m,n)}{\sqrt{m^2 n}}
V(m^2n, t)
g\Big(\frac{m^2 n}{N}\Big) ~dt.
\nonumber
\end{eqnarray}
Here, for the rest of this article we take
$N = T^{3+\varepsilon}$
and $g$ is a fixed non-negative function with compact support in $[1,2]$.  This is the trick of using a dyadic partition of unity
which is best outlined in Lau, Liu, and Ye \cite{LauLiuYeY}.

Now, we apply the Kuznetsov trace formula (\ref{kuznetsov}) to $\mathcal{R}$ (\ref{equationRVN}).
Consequently, we write
\begin{equation}\label{Rdefine}
\mathcal{R} = \mathcal{D} + \mathcal{R}^+  + \mathcal{R}^-;
\end{equation}
\begin{equation}\label{Ddefine}
\mathcal{D} = \sum_{m \geq 1} \sum_{n \geq 1} \frac{A(m,n)}{\sqrt{m^2 n}}
g\Big(\frac{m^2 n}{N}\Big) \delta(n,1) H_{m,n};
\end{equation}
$$
H_{m,n}  =
\frac2\pi
\int_\mathbb{R} k(t) V(m^2 n, t) \tanh (\pi t) t~dt ;
$$
\begin{equation}\label{equationRplus}
\mathcal{R}^+  
=
\sum_{m \geq 1}  \sum_{n \geq 1}  \frac{A(m,n)}{\sqrt{m^2 n}} g\Big(\frac{m^2 n}{N}\Big)
\sum_{c > 0} \frac{S(n,1;c)}{c} H_{m,n}^+ \Big(  \frac{4\pi \sqrt{n}}{c}   \Big);
\end{equation}
\begin{equation}
H_{m,n}^+(x)  
=
2i\int_\mathbb{R} J_{2it} (x) \frac{k(t) V(m^2 n, t)t}{\cosh (\pi t)} ~dt;
\label{Hmnplus}
\end{equation}
\begin{equation}\label{equationRminus}
\mathcal{R}^- = \sum_{m \geq 1}  \sum_{n \geq 1}  \frac{A(m,n)}{\sqrt{m^2 n}}
g\Big(\frac{m^2 n}{N}\Big)
\sum_{c > 0} \frac{S(n,-1;c)}{c}
H_{m,n}^- \Big(  \frac{4\pi \sqrt{n}}{c}   \Big);
\end{equation}
\begin{equation}\label{Hmnminus}
H_{m,n}^-(x)  = \frac{4}{\pi} \int_\mathbb{R} K_{2it} (x) \sinh(\pi t) k(t) V(m^2 n, t)t ~ dt.
\end{equation}

By the estimates in Section 3 of Li \cite{XLi1}, we see easily that $\mathcal{D}$ in \eqref{Ddefine} is negligible
for any $M$ with
$T^\varepsilon \leq M \leq T^{1-\varepsilon}$
and we leave the details for the reader. In the next
two section we will estimate $\mathcal{R}^+$ in 
\eqref{equationRplus} and $\mathcal{R}^-$ in 
\eqref{equationRminus}.

\section{Estimates for the $J$-Bessel function terms}

In this section we provide estimates for $\mathcal{R}^+$ in (\ref{equationRplus}).
In this section and the next, we show estimates under the assumption $T^{1/3 + 2\varepsilon} \leq M \leq T^{1/2}$.
Following Li \cite{XLi1} we define the parameters
\begin{equation}
C_1 = T^{100}, ~~~~\text{and}~~~~C_2 = \frac{\sqrt{N}}{T^{1-\varepsilon}M},    \label{Cconstants}
\end{equation}
and we split $\mathcal{R}^+ = \mathcal{R}_1^+ + \mathcal{R}_2^+ + \mathcal{R}_3^+$ with
\begin{equation}
\mathcal{R}_1^+
= \sum_{m \geq 1}
\sum_{n \geq 1}  \frac{A(m,n)}{\sqrt{m^2 n}}
g\Big(\frac{m^2 n}{N}\Big)
\sum_{  c \geq C_1/m} \frac{S(n,1;c)}{c}
H_{m,n}^+ \Big(  \frac{4\pi \sqrt{n}}{c}   \Big),
\label{eqnR1+}
\end{equation}
\begin{equation}
\mathcal{R}_2^+
= \sum_{m \geq 1}
\sum_{n \geq 1}  \frac{A(m,n)}{\sqrt{m^2 n}}
g\Big(\frac{m^2 n}{N}\Big)
\sum_{C_2/m \leq c \leq C_1/m} \frac{S(n,1;c)}{c}
H_{m,n}^+ \Big(  \frac{4\pi \sqrt{n}}{c}   \Big),
\label{eqnR2+}
\end{equation}
\begin{equation}
\mathcal{R}_3^+
= \sum_{m \geq 1}
\sum_{n \geq 1}  \frac{A(m,n)}{\sqrt{m^2 n}}
g\Big(\frac{m^2 n}{N}\Big)
\sum_{c \leq C_2/m} \frac{S(n,1;c)}{c}
H_{m,n}^+ \Big(  \frac{4\pi \sqrt{n}}{c}   \Big).
\label{eqnR3+}
\end{equation}

For $\mathcal{R}_1^+$ in \eqref{eqnR1+}, Li \cite{XLi1} shifts the integral defining $H_{m,n}^+$ (see (\ref{Hmnplus})),
and uses an integral representation
of the $J$-Bessel function and Stirling's formula to conclude
\begin{equation}\label{Hmnplusestimate}
H_{m,n}^+(x) \ll x^{\frac{3}{4}}
T^{\frac{3}{8}} (m^2 n )^{-\frac{3}{8}} T^{1+\varepsilon}M.    
\end{equation}
Consequently \eqref{eqnR1+} is bounded
\begin{equation}\label{R1+<<}
\mathcal{R}_1^+ \ll T^{\frac{11}{8}+\varepsilon}M
\sum_{m \leq \sqrt{2N}}
\sum_{n \leq {2N}/{m^2}}
\frac{|A(m,n)|}{m\sqrt{n}}
\sum_{c \geq C_1/m}
\frac{|S(n,1;c)|}{c}
\Big(\frac{\sqrt{n}}{c}\Big)^{\frac{3}{4}} \cdot (m^2 n)^{-\frac{3}{8}} .
\end{equation}
Using Weil's bound for $S(n,1;c)$, we see
\begin{equation}\label{Ssum<<}
\sum_{c \geq C_1/m} \frac{|S(n,1;c)|}{c^{\frac{7}{4}}} \ll \sum_{c \geq C_1/m} \frac{c^{\frac{1}{2}+\varepsilon}}{c^{\frac{7}{4}}}
\ll \Big(\frac{C_1}m\Big)^{-\frac{1}{4}+\varepsilon}.
\end{equation}
By (\ref{Asumbyparts}) and summation by parts, we have
\begin{equation}\label{Asum<<}
\sum_{n \leq {2N}/{m^2}} \frac{|A(m,n)|}{\sqrt{n}}
\ll m\Big(\frac{N}{m^2}\Big)^{\frac{1}{2}}.
\end{equation}
Inserting \eqref{Ssum<<} and \eqref{Asum<<} into \eqref{R1+<<} 
we get 
\begin{equation}\label{R1+Bdd}
\mathcal{R}_1^+ \ll T^{\frac{11}{8}+\varepsilon}M N^{\frac{1}{2}} C_1^{-\frac{1}{4}} \sum_{m \leq \sqrt{2N}} \frac{1}{m^{\frac{3}{2}}}.
\end{equation}
Plugging in $C_1 = T^{100}$ from \eqref{Cconstants}, 
$N = T^{3+\varepsilon}$ and noticing the sum on $m$ in 
\eqref{R1+Bdd} converges, we have
$\mathcal{R}_1^+ \ll 1$ for any $M$ with $T^\varepsilon \leq M \leq T^{1-\varepsilon}$.

We now deal with $\mathcal{R}_2^+$ in \eqref{eqnR2+}.  We do not wish to reproduce all the estimates in Li \cite{XLi1} so we will
summarize. As used in Liu and Ye \cite{LiuYeY}
\cite{pet-kuz} and Li \cite{XLi1}
we need an integral representation for
\[
\frac{J_{2it}(x) - J_{-2it}(x)}{\cosh (\pi t)}
\]
from 1.13(69) of \cite{BtmnT}, vol.1, p.59.
Using integration by parts, a change of variables, and the fact that $k(t)$ (recall (\ref{equationk})) is a
Schwartz function, we define
\[
W_{m,n}(x) = T\int_\mathbb{R} \widehat{k^*} (\zeta)
\cos \Big(  x\cosh \Big(\frac{\zeta \pi}{M}\Big)  \Big)
e\Big(  -\frac{T\zeta}{M}  \Big) ~d\zeta.
\]
Here
\[
k^*(t) = e^{-t^2} V(m^2n , tM+T)
\]
is a Schwartz function, and $\widehat{k^*}$ is its Fourier
transform. We remark that derivatives of $k^*(t)$ are $\ll1$.
In fact, by \eqref{V(yt)}
$\frac{\partial^\ell}{\partial t^\ell}V(y,tM+T)$ can be
expressed in terms of derivatives of $\gamma(s,tM+T)$ and hence
in terms of $\frac d{dz}\log\Gamma(z)=:\psi(z)$ and
$\psi^{(\ell)}(z)$ (Bateman \cite{BtmnH} p.15, 1.7(1), and
p.45, 1.16(9)). By their
asymptotic expansions in \cite{BtmnH}, p.47, 1.18(7), and
p.48, 1.18(9), we can see
$$
\frac{\partial^\ell}{\partial t^\ell}V(y,tM+T)
\ll \Big(\frac MT\Big)^\ell.
$$

We define
\begin{equation}
W_{m,n}^*(x)
= T \int_\mathbb{R}
\widehat{k^*} (\zeta) e\Big( -\frac{T\zeta}{M} - \frac{x}{2\pi} \cosh\Big(\frac{\zeta \pi}{M}\Big)  \Big)
~d\zeta,    \label{Wmnstar}
\end{equation}
so that
\[
W_{m,n}(x) = \frac{W_{m,n}^*(x)+W_{m,n}^*(-x)}{2}.
\]
The upshot here is that up to a lower order term (which can be handled in a similar way)
and a negligible amount, we have
$H_{m,n}^+(x) = 4W_{m,n}(x)$.

The contribution to the integral in (\ref{Wmnstar}) from $|\zeta|\geq T^\varepsilon$ is a negligible amount, so
in what follows we can assume
$|\zeta| \leq T^\varepsilon$.  The phase $\phi(\zeta)$ in the exponential (\ref{Wmnstar}) is
\[
2\pi\phi(\zeta) = -\frac{T\zeta}{M} - \frac{x}{2\pi} \cosh\Big(\frac{\zeta \pi}{M}\Big).
\]
Looking at $\phi'(\zeta)$, we see $W_{m,n}^*(x)$ is negligible for $|x|\leq T^{1-\varepsilon}M$.  So in what follows
we assume $T^{1-\varepsilon}M \leq |x| \leq T^2$. 
Using a Taylor expansion in $\zeta$
(within the exponential) of
$$
e\Big( -\frac{T\zeta}{M} - \frac{x}{2\pi}
\cosh\Big(\frac{\zeta \pi}{M}\Big)  \Big)
$$
in (\ref{Wmnstar}), using the
Fourier transform of a Gaussian, using Parseval's Theorem, completing the square, and working out many estimates, 
Lau, Liu and Ye (Lemma 5.1 of \cite{LauLiuYeY}) and
Li (Proposition 4.1 of \cite{XLi1}) proved similar propositions, estimating $W_{m,n}^*(x)$ by a finite series
involving derivatives of $ \widehat{k^*}$,
 based on ideas in Sarnak \cite{Srnk}.  For our purposes we can modify the proof of Proposition 4.1 of \cite{XLi1}.

\begin{lemma}
 \label{propestimate}{\it
1)  For $|x| \leq T^{1-\varepsilon}M$ we have 
$W_{m,n}^*(x) \ll_{\varepsilon, A}   T^{-A}$.

2) For $T^{1-\varepsilon}M \leq |x| \leq T^2$, with
$T^{1/3+2\varepsilon} \leq M \leq T^{{1}/{2}}$ and $L_1, L_2 \geq 1$,
\begin{eqnarray}
W_{m,n}^*(x)
&=&
\frac{TM}{\sqrt{|x|}}
e\Big( -\frac{x}{2\pi} + \frac{T^2}{\pi x} \Big)
\sum_{l = 0}^{L_1}
\sum_{0 \leq l_1 \leq 2l}
\sum_{\frac{l_1}{4}\leq l_2 \leq L_2}
c_{l, l_1, l_2}
\frac{M^{2l - l_1} T^{4l_2-l_1}}{x^{l+3l_2-l_1}}
\label{Wmnestimate}
\\
&&\times
\Big[    \widehat{k^*}^{(2l-l_1)}
\Big(-\frac{2MT}{\pi x}\Big)
- \frac{\pi^6 i x}{6! M^6} (y^6 \widehat{k^*}(y))^{(2l-l_1)}
\nonumber
\\
&&+
\frac{\pi^{12} i^2 x^2}{2! (6!)^2 M^{12}} (y^{12} \widehat{k^*}(y))^{(2l-l_1)}
\Big(-\frac{2MT}{\pi x}\Big)
\Big]
\nonumber
\\
&&+
O\Big( \frac{TM}{\sqrt{|x|}}
\Big(\frac{T^4}{|x|^3}\Big)^{L_2+1}
+T\Big(\frac{M}{\sqrt{|x|}}\Big)^{2L_1+3}
+ \frac{T|x|^3}{M^{18}}
\Big),
\nonumber
\end{eqnarray}
where $c_{l, l_1, l_2} $ are constants depending only on the indices.}
\end{lemma}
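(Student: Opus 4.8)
The plan is to adapt the weighted stationary-phase analysis of Li (Proposition~4.1 of \cite{XLi1}), which itself rests on Lau--Liu--Ye (Lemma~5.1 of \cite{LauLiuYeY}) and, ultimately, on an idea of Sarnak \cite{Srnk}: to evaluate the integral \eqref{Wmnstar} one isolates a complex-Gaussian factor in the phase, applies Parseval together with the exact Fourier transform of a Gaussian, and completes the square, now carried to arbitrary order.

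\emph{Part 1.} First truncate the $\zeta$-integral in \eqref{Wmnstar} to $|\zeta|\le T^\varepsilon$: since $k^*$ has all derivatives $\ll 1$ (as the text records, via Stirling's formula for $\gamma(s,tM+T)$ and the digamma asymptotics), $\widehat{k^*}$ is concentrated at essentially unit scale with super-polynomial tails, so the complementary range contributes $\ll_{A}T^{-A}$. On the truncated range the phase $2\pi\phi(\zeta)=-T\zeta/M-\tfrac{x}{2\pi}\cosh(\zeta\pi/M)$ has $2\pi\phi'(\zeta)=-T/M-\tfrac{x}{2M}\sinh(\zeta\pi/M)$, and since $M\ge T^{1/3+2\varepsilon}$ makes $\zeta\pi/M$ tiny, the second summand is $o(T/M)$ on the effective support of $\widehat{k^*}$ when $|x|\le T^{1-\varepsilon}M$; hence $|\phi'(\zeta)|\gg T/M\gg T^{1/2}$, while $\phi^{(k)}(\zeta)=O(|x|/M^{k})$ stays under control. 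Repeated integration by parts --- using these bounds, the boundedness of all derivatives of $\widehat{k^*}$, and the negligibility of the boundary terms at $|\zeta|=T^\varepsilon$ --- then yields $W^*_{m,n}(x)\ll_{A}T^{-A}$.

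\emph{Part 2.} Now $T^{1-\varepsilon}M\le|x|\le T^2$. After the same truncation, $\phi'$ has one zero, near $\zeta_0=-2MT/(\pi x)$, coming from $\sinh(\zeta_0\pi/M)=-2T/x$ with $|2T/x|\le 2T^\varepsilon/M$ small. Centering at $\zeta_0$ and Taylor-expanding $\cosh(\zeta\pi/M)=1+\tfrac{\pi^2\zeta^2}{2M^2}+\tfrac{\pi^4\zeta^4}{4!M^4}+\tfrac{\pi^6\zeta^6}{6!M^6}+\cdots$, keep the constant and quadratic parts of the cosh in the exponent as a genuine complex Gaussian: this yields the prefactor $TM/\sqrt{|x|}$ from $T/\sqrt{|\phi''(\zeta_0)|}$ (note $|\phi''(\zeta_0)|\asymp|x|/M^2$) and the leading phase $e\big(-\tfrac{x}{2\pi}+\tfrac{T^2}{\pi x}\big)$ from $\phi(\zeta_0)$. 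On the effective range about $\zeta_0$ (width $\ll(M/\sqrt{|x|})T^\varepsilon$) the remaining quartic-and-higher part $R(\zeta)$ of the cosh obeys $|xR(\zeta)|\ll T^4/|x|^3\le T^{-3\varepsilon}$ --- exactly where $|x|\ge T^{1-\varepsilon}M\ge T^{4/3+\varepsilon}$ enters --- so $e\big(-\tfrac{x}{2\pi}R(\zeta)\big)$ is a finite Taylor series with controlled remainder. One then evaluates the resulting moments $\int\zeta^{j}\widehat{k^*}(\zeta_0+\zeta)\,e(\text{linear}+\text{quadratic})\,d\zeta$ by writing the Gaussian via its exact Fourier transform, applying Parseval, and completing the square, expressing each as a finite combination of derivatives $\big(y^{a}\widehat{k^*}(y)\big)^{(b)}$ at $y=\zeta_0$ with monomial coefficients in $M,T,x$. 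Collecting terms --- the index $l$ tracking the order of the stationary-phase/$\widehat{k^*}$-Taylor correction, $l_2$ the power of $T^4/|x|^3$ from the expansion of $R$, the $\zeta^4$-contributions re-expressed as plain $\widehat{k^*}^{(2l-l_1)}(\zeta_0)$, and the sextic term of the cosh carried explicitly through its own expansion truncated at second order (the source of the $(y^6\widehat{k^*})$ and $(y^{12}\widehat{k^*})$ terms and of the $T|x|^3/M^{18}$ error) --- produces the main-term bracket displayed in \eqref{Wmnestimate} together with the three big-$O$ terms $\tfrac{TM}{\sqrt{|x|}}(T^4/|x|^3)^{L_2+1}$, $T(M/\sqrt{|x|})^{2L_1+3}$ and $T|x|^3/M^{18}$.

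\emph{Main obstacle.} The delicate point is the bookkeeping: three interleaved expansions --- of the slowly varying weight $\widehat{k^*}$ about $\zeta_0$, of the phase remainder $e\big(-\tfrac{x}{2\pi}R(\zeta)\big)$, and of the auxiliary series used to realize each Gaussian-weighted moment via Parseval --- must be tracked simultaneously and their cross terms organized so that precisely the stated errors emerge, uniformly over $T^{1/3+2\varepsilon}\le M\le T^{1/2}$ and $T^{1-\varepsilon}M\le|x|\le T^2$. The uniform bounds on the derivatives of $\widehat{k^*}$ and the index-dependence of the constants $c_{l,l_1,l_2}$ are routine given the Stirling estimates already in place; the combinatorial organization of the main terms, matching Li's Proposition~4.1 while carrying the extra sextic-correction terms, is what demands care.
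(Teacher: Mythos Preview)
Your proposal is correct and follows essentially the same approach as the paper. The paper itself does not give a detailed proof of this lemma: it refers to Proposition~4.1 of Li \cite{XLi1} (built on Lau--Liu--Ye \cite{LauLiuYeY} and Sarnak \cite{Srnk}), listing the same ingredients you name --- Taylor expansion of the $\cosh$ in the exponential, the Fourier transform of a Gaussian, Parseval, and completing the square --- and then observes that the only modification needed is a degree-$2$ Taylor expansion (with remainder) of $e\big(-\pi^6 i x\zeta^6/(2\cdot 6!M^6)\big)$, which is exactly the source you identify for the $(y^6\widehat{k^*})^{(\cdot)}$ and $(y^{12}\widehat{k^*})^{(\cdot)}$ terms and the $T|x|^3/M^{18}$ error.
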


Note Part $1)$ is valid for $T^\varepsilon \leq M \leq T^{1-\varepsilon}$, and Part $2)$
is  valid for $T^{{1}/{3} + \varepsilon} \leq M \leq \sqrt{T}$ with the assumption of 
$T^{1-\varepsilon}M \leq |x| \leq T^2$.
With our assumption 
$T^{{1}/{3} + 2\varepsilon} \leq M \leq \sqrt{T}$ on $M$, 
to acquire the desired decay rate of the
\begin{equation}\label{OTMpower}
O\Big( \frac{TM}{\sqrt{|x|}}
\Big(\frac{T^4}{|x|^3}\Big)^{L_2+1} \Big)
\end{equation}
term, $L_2$ could depend on $\varepsilon$.
From $1)$ of Lemma \ref{propestimate} and 
\eqref{Hmnplusestimate} we see $\mathcal{R}_2^+$ is negligible.
The extra term in the brackets in (\ref{Wmnestimate}), as compared to  \cite{XLi1},
comes from a degree $2$ Taylor expansion in $x$
(with remainder) of
$e(-\pi^6 i x \zeta^6/(2\cdot 6! M^6))$.

In the rest of this section, we estimate $\mathcal{R}_3^+$ 
as in \eqref{eqnR3+}.  By choosing $L_1, L_2$
large enough
(possibly depending on $\varepsilon$) in (\ref{Wmnestimate}) the contribution to $\mathcal{R}_3^+$
from the first two error terms in (\ref{Wmnestimate}) can be made as small as desired.
We need to estimate the contribution from the last error term in (\ref{Wmnestimate}).
By the support of $g$ we may assume 
$x^2 =16\pi^2 n/c^2 \ll N = T^{3 + \varepsilon}$.
By our assumptions on $M$ and $T$ we then have 
$T|x|^3/M^{18}  \ll T|x|/M^9$.
Plugging in $x= 4\pi \sqrt{n}/c$ into $T|x|/M^9$, we estimate this error term contribution to
$\mathcal{R}_3^+$ in (\ref{eqnR3+}), using 
\eqref{Hmnplusestimate}, Weil's bound for the
Kloosterman sum, and the compact support of $g$.  This error can be seen to be bounded by
$O( TN/M^9)$ which is smaller than $O( T^{1+\varepsilon}M)$ by a power of $T$
with our assumption
$T^{{1}/{3} + 2\varepsilon} \leq M \leq \sqrt{T}$.  In the finite series 
(\ref{Wmnestimate}) with our assumptions we also have
$M^{2l - l_1}T^{4l_2-l_1}x^{l_1-l-3l_3} \ll 1$. 
All the terms in (\ref{Wmnestimate}) are similar, and can be estimated in a similar way, so we will only work
with the first term.
Following Li \cite{XLi1} we define
\begin{eqnarray}   \label{mathcalR3}
\widetilde{\mathcal{R}}_3^+
&=&
\frac{i(i+1) MT}{\sqrt{2} \pi}
\sum_{m\geq 1} \sum_{n\geq 1}
\frac{A(m,n)}{mn^{{3}/{4}}}
g\Big(\frac{m^2 n}{N}\Big)
\\
&&\times
\sum_{c \leq C_2/m} \frac{S(n,1;c)}{\sqrt{c}} e\Big(
\frac{2\sqrt{n}}{c} -\frac{T^2c}{4\pi^2\sqrt{n}}  \Big)
\widehat{k^*} \Big(  \frac{MTc}{2\pi^2 \sqrt{n}}  \Big).
\nonumber
\end{eqnarray}

Li \cite{XLi1} points out here, that even with Weil's bound for $S(n,1;c)$ simple estimates for
$\widetilde{\mathcal{R}}_3^+$ are too large.  So we expand the Kloosterman sum $S(n,1;c)$ and use
the Voronoi formula (Lemma \ref{propVoronoi})
with
\begin{equation}\label{psiy}
\psi(y)
= y^{-\frac{3}{4}}
g\Big(\frac{m^2 y}{N}\Big)
e\Big(
\frac{2\sqrt{y}}{c} -\frac{T^2c}{4\pi^2\sqrt{y}}  \Big) \widehat{k^*} \Big(  \frac{MTc}{2\pi^2 \sqrt{y}}  \Big).
\end{equation}
We get
\begin{equation}\label{R3+short}
\widetilde{\mathcal{R}}_3^+
=
\frac{(i-1) MT}{\sqrt{2} \pi}
\sum_{m\geq 1}\frac1m
\sum_{c\leq C_2/m}\frac1{\sqrt c}
{\mathop{\sum\nolimits^*}
\limits_{d\,(\!\!\!\!\!\mod c)}}e\Big(\frac dc\Big)
\sum_{n\geq1}
A(m,n)e\Big(\frac{n\bar d}c\Big)\psi(n),
\end{equation}
where the innermost sum in \eqref{R3+short} will be replaced 
by the right hand side of \eqref{Voronoidef}. 

From the function $g(m^2y/N)$ in \eqref{psiy} we can see that 
$X=N/m^2$. Recall $x = {n_2 n_1^2}/(c^3 m)$ from Lemma 
\ref{propVoronoi}. Then by $c \leq C_2/m$ 
$$
xX
=
\frac{n_2n_1^2N}{c^3m^3}
\geq
\frac{n_2n_1^2N}{C_2^3}
=
\frac{n_2n_1^2T^{3-3\varepsilon}M^3}{\sqrt N}
\geq
n_2n_1^2T^{3/2-3\varepsilon}M^3
\gg
1.
$$
Consequently we can apply Lemma \ref{Vasymptotics} to 
\eqref{R3+short} with \eqref{Voronoidef} 
to get 
\begin{equation}\label{Psi0asymptotics}
\Psi_0(x) = \pi^3 d_1 x^{{2}/{3}}   \int_0^\infty e(u_1(y))a(y)~dy -
\pi^3 d_1 x^{{2}/{3}}   \int_0^\infty e(u_2(y))a(y)~dy  
\end{equation}
with
\begin{equation}\label{u1u2}
u_1(y) = \frac{2\sqrt{y}}{c} +3(xy)^{{1}/{3}}, ~~~~u_2(y) = \frac{2\sqrt{y}}{c} -3(xy)^{{1}/{3}}
\end{equation}
and
\begin{equation}   \label{equationay}
a(y) =
g\Big(\frac{m^2 y}{N}\Big)
\widehat{k^*}\Big(\frac{MTc}{2 \pi^2 \sqrt{y}}\Big)
e\Big(\frac{-T^2c}{4 \pi^2 \sqrt{y}}\Big)
y ^{-{13}/{12}}.
\end{equation}
Note that $u_1$ has no stationary points; indeed  simple calculus estimates
give the first integral in (\ref{Psi0asymptotics}) a negligible contribution to
$\widetilde{\mathcal{R}}_3^+$.

The second integral in (\ref{Psi0asymptotics}) requires more analysis. As in \cite{XLi1}, p.319, if 
$x\geq2\sqrt{N}/(c^3 m)$ or $x\leq2\sqrt{N}/(3c^3 m)$, 
then $u_2'(y)$ will be effectively
bounded away from zero, making the integral negligible by multiple integration by parts.
Thus we assume the contrary in what follows, namely
\begin{equation}
\frac{2\sqrt{N}}{3n_1^2} \leq n_2 \leq \frac{\sqrt{N}}{n_1^2}.     \label{equationn1n2}
\end{equation}
We have
\begin{equation}\label{eaintegral}
\int_0^\infty e(u_2(y)) a(y) ~dy = \int_{\frac{1}{4}x^2 c^6}^{\frac{9}{2}x^2 c^6} e(u_2(y)) a(y) ~dy.
\end{equation}
We explain the limits of integration.
The compact support of the integral on the right side of equation (\ref{eaintegral}) follows from the compact support
of $g$, and so that of $a$. Further, recall
$x = {n_2 n_1^2}/(c^3 m)$.  As Li \cite{XLi1} points out, the stationary phase point of the integral in (\ref{eaintegral}) is
at $y_0 = x^2 c^6$.  The constants $1/4$ and $9/2$ in the limits of this integral give a segment that
the support of $a$ is contained in, since $g \in C_c^\infty([1,2])$.
In (\ref{equationay}), from the support of $g$, and since $\widehat{k^*}$ is a Schwartz function, we can assume
\[
\frac{N}{m^2} \leq y \leq\frac{2N}{m^2}  ~~\text{and}~~ \frac{MTc}{2\pi^2 \sqrt{y}} \ll T^\varepsilon.
\]
Using this information, simple calculus estimates give us
\begin{equation}   \label{equationu2a}
u_2^{(r)} (y) \ll T_1 M_1^{-r}  ~~\text{for}~ r = 1, 2, \ldots, 2n_0 +3
\end{equation}
and
\begin{equation}   \label{N-equationu2a}
a^{(r)}(y) \ll U_1 N_1^{-r}~~\text{for} ~r = 0, 1, 2, \ldots, 2n_0 +1
\end{equation}
for $y$ in the segment. Here $n_0 \in \mathbb{N}$ will be chosen in terms of $\varepsilon_0$ later, and
\begin{equation}\label{MTNU}
M_1 = \frac{N}{m^2}, ~~T_1 = \frac{\sqrt{N}}{cm}, ~~N_1 = \frac{N^{3/2}}{T^2 c m^3},
~~U_1 = \Big(\frac{N}{m^2}\Big)^{-13/12} .     
\end{equation}
Further,    $u_2^{(2)}(y) \gg T_1 M_1^{-2}$ for $y \in [\frac{1}{4}x^2 c^6,  \frac{9}{2}x^2 c^6]$.
The condition $N_1 \geq {M_1}/{\sqrt{T_1}}$ is then consistent with our assumption
$c \leq C_2/m$ when $M \geq T^{{1}/{3}+2\varepsilon}$.

Then, all assumptions \eqref{M-N} and \eqref{UfLfUg} 
are satisfied for parameters in \eqref{MTNU}, and we apply Proposition \ref{WSPI}   
(where we take $n=n_0$). Or, one may use 
Blomer, Khan, and Young's version in \cite{BlmKhnYng}. 
The main term of the integral in (\ref{eaintegral}) is
\begin{equation}\label{mainterm}
\frac{e(u_2(y_0)\pm{1}/{8})}{\sqrt{|u_2^{''}(y_0)|}}
\Big( a(y_0) + \sum_{j=1}^{n_0}\varpi_{2j}\frac{(-1)^{j}(2j-1)!!}{(4\pi i\lambda_2)^j}    
\Big),
\end{equation}
where $\varpi_{2j}$ are defined above and
$\lambda_2 = {u''_2(y_0)}/{2}$.
Notice we have used
$\gamma - \alpha  \asymp \beta - \gamma \asymp  M_1$,
with $\alpha = \frac{1}{4}x^2 c^6$, $\beta = \frac{9}{2}x^2 c^6$ and $\gamma = y_0 = x^2 c^6$.
To save time in estimates, notice there are no boundary terms here.
This is due to the compact support of $a$, with itself and all of its derivatives zero at
$\frac{1}{4}x^2 c^6$ and $\frac{9}{2}x^2 c^6$.
The sum of the four error terms in Proposition \ref{WSPI} can be simplified to
\begin{equation}\label{estimateOO}
O \Big( \frac{U_1 M_1^{2n_0 +2}  }{T_1^{n_0 + 1} N_1^{2n_0 +1}}  \Big).  
\end{equation}
This estimate uses the current assumptions on $c$ and $m$, and the size of $N$ compared to $T$.
Note that $M_1 \gg N_1$.

We need to estimate this error term, as well as error terms coming from the $\varpi_{2j}$ terms which will be very
similar.
First we need a nifty estimate from
Li \cite{XLi1}.
Using the basic definitions, as Li points out (equation (4.22)  of \cite{XLi1})
\begin{equation}
{\mathop{\sum\nolimits^*}
\limits_{0 \leq d \leq c }}
e\Big(\frac{d}{c}\Big) S(md, n_2; mcn_1^{-1})
=
{\mathop{\sum\nolimits^*}
\limits_{u\,(\text{mod}\, mcn_1^{-1}) } }
S(0, 1+un_1 ; c)
e\Big(\frac{n_2 \bar{u}}{mcn_1^{-1}}\Big) .   \label{Ramanujan}
\end{equation}
Here $u \bar{u} \equiv 1 (\!\!\!\mod mcn_1^{-1}) $ and
\[
S(0, a; c) =
{\mathop{\sum\nolimits^*}
\limits_{v(\!\!\!\!\! \mod c) }}
e\Big(\frac{av}{c}\Big)
\]
is the Ramanujan sum, which is $\ll (a,c)$.
Then (\ref{Ramanujan}) is bounded by
\begin{equation}\label{<<sumd1}
\ll
{\mathop{\sum\nolimits^*}
\limits_{u\,(\text{mod}\, mcn_1^{-1}) } }   (1+un_1, c)
%\sum_u (1+un_1, c)
~=~ \sum_{d|c} d
{\mathop{\sum\nolimits}
\limits_{ \stackrel{u\,(\text{mod}\, mcn_1^{-1})}{ (1+un_1, c) = d } } }
%\sum_{u,\ (1+un_1, c) = d}
1
 ~\ll~
\sum_{d|c} d
{\mathop{\sum\nolimits}
\limits_{ \stackrel{u\,(\text{mod}\, mcn_1^{-1})}{ un_1 \equiv -1 (\!\!\!\mod d) } } }  1.
%\sum_{u,\ un_1 \equiv -1 (\!\!\!\!\!\mod d)}  1.
\end{equation}
Now $(n_1,d)=1$ and so $\bar n_1$ exists $(\!\!\!\! \mod d)$. 
Thus the last inner sum in \eqref{<<sumd1} is over all $u$ with
$0 \leq u < mcn_1^{-1}$ and $u \equiv -\bar n_1 (\!\!\!\mod d)$.  The number of such terms is clearly $\asymp mc/(dn_1)$.
Plugging this into \eqref{<<sumd1} we see that 
(\ref{Ramanujan}) is bounded by 
\begin{equation}\label{importantsavings}
{\mathop{\sum\nolimits^*}
\limits_{0 \leq d \leq c }}
e\Big(\frac{d}{c}\Big) S(md, n_2; mcn_1^{-1})
\ll \frac{mc}{n_1} \sum_{d|c} 1 
\ll \frac{mc^{1+\varepsilon}}{n_1} .
\end{equation}

Now let us turn back to \eqref{R3+short} with \eqref{Voronoidef} 
and \eqref{Psi0+1}. As we pointed out before, we will only 
consider the contribution from $\Psi_0(x)$ 
for $x=n_2n_1^2/(c^3m)$. In other words,
\begin{equation}\label{R3+<<Kl}
\widetilde{\mathcal{R}}_3^+
\ll  MT 
\sum_{m \leq C_2}
\frac1m
\sum_{c \leq {C_2}/{m}}
c^{1/2}
\sum_{n_1 | cm}
\sum_{n_2 >0}
\frac{|A(n_2, n_1)|}{n_1n_2}
\Big|\Psi_0\Big(\frac{n_2n_1^2}{c^3m}\Big)\Big|
\ \Big|
{\mathop{\sum\nolimits^*}
\limits_{0 \leq d \leq c }}
e\Big(\frac{d}{c}\Big) S(md, n_2; mcn_1^{-1})
\Big|.
\end{equation}
We know we need actually consider the 
contribution from the second term in \eqref{Psi0asymptotics}. 
Using \eqref{importantsavings}, \eqref{R3+<<Kl} can be 
reduced to  
\begin{equation}\label{R3+<<int}
\widetilde{\mathcal{R}}_3^+
\ll  MT 
\sum_{m \leq C_2}
m^{-2/3}
\sum_{c \leq {C_2}/{m}}
c^{-1/2+\varepsilon}
\sum_{n_1 | cm}
n_1^{-2/3}
\sum_{n_2\asymp\sqrt N/n_1^2}
\frac{|A(n_2, n_1)|}{n_2^{1/3}}
\int_0^\infty
e(u_2(y))a(y)dy.
\end{equation}
The following Lemma is specific to the estimation of 
(\ref{R3+<<int}).

\begin{lemma}   \label{lemmacTNm}
Assume $\alpha\geq -1/2$ and $\delta - \alpha \geq 1/6$. 
Suppose we have a term bounded by 
$O(c^\alpha T^\beta N^\gamma m^\delta) $ with
specific numbers $\alpha, \beta, \delta, and \gamma$ for the  integral in \eqref{R3+<<int}.
Then the contribution of this term to 
$\widetilde{\mathcal{R}}_3^+$ is
\[
\ll  
M^{2/3 - \delta - 2\varepsilon} 
T^{13/6 + \beta  + 3\gamma + \delta/2 + \varepsilon_1}
\]
where $\varepsilon$ is arbitrarily small from 
\eqref{importantsavings}, and 
$\varepsilon_1 = \varepsilon
(11/6+ 3\delta/2 + \gamma) + 3\varepsilon^2$.
\end{lemma}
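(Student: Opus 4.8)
The plan is to insert the hypothesised bound $\int_0^\infty e(u_2(y))a(y)\,dy\ll c^\alpha T^\beta N^\gamma m^\delta$ directly into \eqref{R3+<<int}, to carry out the four remaining summations over $n_2$, $n_1$, $c$ and $m$ one after another, and finally to substitute $N=T^{3+\varepsilon}$ and $C_2=\sqrt N/(T^{1-\varepsilon}M)$ from \eqref{Cconstants}. After the substitution one is left with
\begin{equation*}
\widetilde{\mathcal{R}}_3^+\ll MT\cdot T^\beta N^\gamma\sum_{m\le C_2}m^{\delta-2/3}\sum_{c\le C_2/m}c^{\alpha-1/2+\varepsilon}\sum_{n_1\mid cm}n_1^{-2/3}\sum_{n_2\asymp\sqrt N/n_1^2}\frac{|A(n_2,n_1)|}{n_2^{1/3}}.
\end{equation*}

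First I would treat the innermost sum. Since $f$ is self-dual, $A(n_2,n_1)=A(n_1,n_2)$, so \eqref{Asumbyparts} gives $\sum_{n\le X}|A(n_1,n)|\ll n_1X$; partial summation against the weight $n_2^{-1/3}$ over the short range \eqref{equationn1n2} then yields $\sum_{n_2\asymp\sqrt N/n_1^2}|A(n_2,n_1)|n_2^{-1/3}\ll n_1\bigl(\sqrt N/n_1^2\bigr)^{2/3}=N^{1/3}n_1^{-1/3}$. Summing the resulting $N^{1/3}n_1^{-1}$ over the divisors $n_1\mid cm$ costs only the divisor bound $\sum_{n_1\mid cm}n_1^{-1}\ll(cm)^\varepsilon$. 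Next, since $\alpha\ge-1/2$ the exponent $\alpha-1/2+\varepsilon$ exceeds $-1$, so $\sum_{c\le C_2/m}c^{\alpha-1/2+\varepsilon}\ll(C_2/m)^{\alpha+1/2+\varepsilon}$. This leaves a sum over $m$ whose exponent is $\delta-2/3-(\alpha+1/2)=\delta-\alpha-7/6$, which is $\ge-1$ exactly because of the hypothesis $\delta-\alpha\ge1/6$; hence $\sum_{m\le C_2}m^{\delta-\alpha-7/6}\ll C_2^{\delta-\alpha-1/6+\varepsilon}$ (only a logarithm in the boundary case $\delta-\alpha=1/6$). The decisive point is that the two powers of $C_2$ combine to $C_2^{(\alpha+1/2+\varepsilon)+(\delta-\alpha-1/6+\varepsilon)}=C_2^{1/3+\delta+2\varepsilon}$, so that $\alpha$ cancels altogether and one obtains $\widetilde{\mathcal{R}}_3^+\ll MT\cdot T^\beta N^{\gamma+1/3}C_2^{1/3+\delta+2\varepsilon}$.

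It then remains to substitute $N=T^{3+\varepsilon}$, so that $N^{\gamma+1/3}=T^{(3+\varepsilon)(\gamma+1/3)}$, and $C_2=\sqrt N/(T^{1-\varepsilon}M)=T^{1/2+3\varepsilon/2}M^{-1}$, and to collect exponents. The power of $M$ comes out to $1-(1/3+\delta+2\varepsilon)=2/3-\delta-2\varepsilon$, and the power of $T$ to $1+\beta+(3+\varepsilon)(\gamma+1/3)+(1/2+3\varepsilon/2)(1/3+\delta+2\varepsilon)$; its constant part is $1+1+1/6=13/6$, its $\beta,\gamma,\delta$-part is $\beta+3\gamma+\delta/2$, and the remaining terms collect into $\varepsilon_1=\varepsilon(11/6+3\delta/2+\gamma)+3\varepsilon^2$. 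This is precisely the claimed estimate.

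I do not expect a genuine obstacle; the lemma is a bookkeeping device that performs the $n_2$-, $n_1$-, $c$- and $m$-summations once and for all, so that later applications only need to supply the four exponents $\alpha,\beta,\gamma,\delta$ of the oscillatory integral. The only points that require care are the partial summation in the $n_2$-sum, so that the weight $n_2^{-1/3}$ is absorbed correctly through \eqref{Asumbyparts}, and the verification that the $c$- and $m$-exponents stay above (respectively at) $-1$ under the two hypotheses $\alpha\ge-1/2$ and $\delta-\alpha\ge1/6$ — which is exactly why those hypotheses are imposed and is what forces all four sums to converge to a single clean power. The cancellation of $\alpha$ in the exponent of $C_2$ is the one feature worth flagging, and $\varepsilon_1$ then results simply from multiplying out $(3+\varepsilon)(\gamma+1/3)$ and $(1/2+3\varepsilon/2)(1/3+\delta+2\varepsilon)$ and absorbing the stray $\varepsilon$'s from the divisor bound and the $c$- and $m$-sums.
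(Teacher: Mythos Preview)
Your proof is correct and follows essentially the same route as the paper: insert the bound into \eqref{R3+<<int}, evaluate the $n_2$-sum by \eqref{Asumbyparts} with partial summation, the $n_1$-sum by the divisor bound, and the $c$- and $m$-sums by the two hypotheses on $\alpha$ and $\delta-\alpha$, then substitute $C_2$ and $N$. The only cosmetic difference is in the $\varepsilon$-bookkeeping: the paper tracks the two stray $\varepsilon$'s (from \eqref{importantsavings} and from the divisor bound) separately and handles the boundary case $\delta-\alpha=1/6$ via $C_2\ge T^\varepsilon$ rather than by an extra $C_2^\varepsilon$, but the resulting exponent $C_2^{1/3+\delta+2\varepsilon}$ and the final bound are identical.
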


\begin{proof}
By \eqref{R3+<<int} the contribution of 
$O(c^\alpha T^\beta N^\gamma m^\delta) $  
to $\widetilde{\mathcal{R}}_3^+$ is 
\begin{equation}\label{<<4sums}
\ll  MT \sum_{m \leq C_2}
m^{-2/3}
\sum_{c \leq {C_2}/{m}}
c^{-{1}/{2}+\varepsilon}
\sum_{n_1 | cm}
n_1^{-2/{3}}
\sum_{n_2 \asymp {\sqrt{N}}/{n_1^2}}
\frac{|A(n_1, n_2)|}{n_2^{{1}/{3}}}
c^\alpha T^\beta N^\gamma m^\delta.
\end{equation}
Note that the innermost sum in \eqref{<<4sums} is over
(\ref{equationn1n2}). Also note Li \cite{XLi1} seems 
to have used the estimate $(mc)^{1+\varepsilon}$ instead of the estimate
${mc^{1+\varepsilon}}/{n_1}$ from (\ref{importantsavings}). 
Since the sum on $n_1$ is a divisor sum, this is not an issue here.
Using the estimates for $|A(n_1, n_2)|$ (see (\ref{Asumbyparts})), and partial summation one has
$$
\sum_{n_2 \asymp {\sqrt{N}}/{n_1^2}}
\frac{|A(n_1, n_2)|}{n_2^{{1}/{3}}}
\ll n_1
\Big(\frac{\sqrt{N}}{n_1^2}\Big)^{{2}/{3}}.
$$
Since the number of divisors of $cm$ is $\ll (cm)^\varepsilon$ this simplifies the contribution to \eqref{<<4sums} to
\begin{equation}\label{<<2sums}
\ll ~MT^{1+\beta}N^{{1}/{3}+ \gamma}
\sum_{m \leq C_2}
m^{-{2}/{3}+\varepsilon + \delta}
 \sum_{c \leq {C_2}/{m}}
c^{-{1}/{2}+2\varepsilon +\alpha}.
\end{equation}
From a calculus estimate, we have
$$
 \sum_{c \leq {C_2}/{m}}
c^{-{1}/{2}+2\varepsilon +\alpha}
\ll
\Big(\frac{C_2}m\Big)^{{1}/{2}+2\varepsilon +\alpha},
$$
because $\alpha\geq-1/2$ and $m\leq C_2$. 
Plugging this into \eqref{<<2sums}, and using
$C_2 = \sqrt{N}/(T^{1-\varepsilon}M)$
we have
\begin{equation}\label{<<1sum}
\ll
MT^{1+\beta}N^{1/3+ \gamma} 
\Big( \frac{\sqrt{N}}{T^{1-\varepsilon}M} \Big)
^{1/2+2\varepsilon +\alpha}
\sum_{m \leq C_2}  
m^{-7/6 + \delta - \alpha-\varepsilon}  .
\end{equation}
Now, since $\delta-\alpha\geq1/6$, we have
\begin{equation}\label{msum<<}
\sum_{m \leq C_2}  
m^{-7/6 + \delta-\alpha-\varepsilon}  
\ll
C_2^{-1/6 +\delta - \alpha-\varepsilon}+1
\ll
C_2^{-1/6 +\delta - \alpha},
\end{equation}
because 
$C_2=\sqrt N/(T^{1-\varepsilon}M)
=T^{1/2+\varepsilon}/M\geq T^\varepsilon$. 
Inserting \eqref{msum<<} into \eqref{<<1sum}, we see 
\eqref{<<4sums} is bounded by
$$
\ll
M^{2/3-\delta -2\varepsilon} 
T^{2/3+\beta-\delta+\varepsilon(\delta-5/3+2\varepsilon)}
N^{1/2+ \gamma + \delta/2 + \varepsilon}.
$$
Now plugging in $N= T^{3+\varepsilon}$ gives our Lemma.
\end{proof}

Now let us turn back to the error term (\ref{estimateOO}). 
By \eqref{MTNU}, \eqref{estimateOO} can be written as 
\begin{equation}\label{OcTNm}
O  \Big(   c^{3n_0 +2} T^{4n_0 + 2  } N^{-\frac{3}{2}n_0 - \frac{13}{12}}
m^{3n_0 + \frac{13}{6}}   \Big).
\end{equation}
Since $(3n_0+13/6)-(3n_0+2)=1/6$, we may apply Lemma 
\ref{lemmacTNm} to \eqref{OcTNm} and get its contribution to 
$\widetilde{\mathcal{R}}_3^+$ as
\begin{equation}\label{MTpower}
O(M^{-3n_0-3/2-2\varepsilon}
T^{n_0+2+\varepsilon_1}),
\end{equation}
where $\varepsilon>0$ is arbitrarily small as in 
\eqref{importantsavings} and 
$\varepsilon_1=\varepsilon(3n_0+4)+3\varepsilon^2$. 
For any $\varepsilon_0>0$ arbitrarily small, we want to 
make \eqref{MTpower} $\ll T^{1+\varepsilon_0}M$. This can be 
done if
\begin{equation}\label{n0estimate}
M \geq 
T^{\frac{n_0 + 1 +\varepsilon_1 -\varepsilon_0}
{3n_0 + 5/2 +  \varepsilon}}.
\end{equation}
We will choose $n_0$ later depending on $\varepsilon_0$. 
Notice that if $n_0 = 1/2$, we pick up the $3/8$ constant 
of Li \cite{XLi1} from \eqref{n0estimate}. This 
concludes the estimation of contribution of error terms 
\eqref{estimateOO} 
in Proposition \ref{WSPI} to $\widetilde R_3^+$. 

We now need to deal with the $\varpi_{2j}$ terms in (\ref{mainterm}) and their contribution to $\widetilde R_3^+$. 
Recall the expression for
$\varpi_{2j}$ in (\ref{varpi-5}).  Here we take $2 \leq 2j \leq 2n_0$.
One can see from (\ref{varpi-5}) that the main term from $\varpi_{2j}$ is $a^{(2j)}(y_0)$.
(Here $a(y)$ given in (\ref{equationay}) and $u_2(y)$ in 
(\ref{u1u2}) take the place of $g$ and $f$ in Proposition 
\ref{WSPI}.  Further $y_0$ takes the place of $\gamma$.)
Using the estimates in (\ref{equationu2a}) and (\ref{N-equationu2a}) along with $|u_2''(y_0)|\gg T_1/ M_1^2$, and 
along with our
current assumptions on $c$ and $m$ in \eqref{mathcalR3} 
we have
\begin{equation}\label{varpi-a}
\varpi_{2j} - a^{(2j)}(y_0) 
= 
O \Big(    \frac{U_1}{M_1 N_1^{2j-1}}
\Big)
\end{equation}
The constant ultimately depends on $n_0$ and we have used $M_1 \gg N_1$.  To estimate the contribution of this error term 
\eqref{varpi-a} to $\widetilde{\mathcal{R}}_3^+$, we must divide 
by $\lambda_2^{j+ \frac{1}{2}}$ and sum over $j$.
(See (\ref{mainterm}).)
Since $y_0 \asymp N/m^2$, we have
 $\lambda_2 \asymp m^3N^{-3/2}/c$. 
We have then that this contribution is
\[
\ll \Big(  \frac{N}{m^2} \Big)^{-\frac{25}{12}}    \Big( \frac{T^2 c m^3}{N^{\frac{3}{2}}}  \Big)^{2j-1}
 \Big(  \frac{cN^{\frac{3}{2}}}{m^3} \Big)^{j + \frac{1}{2}} =
 O\Big(  c^{3j - \frac{1}{2}} T^{4j-2} N^{-\frac{3}{2}j + \frac{1}{6}}  m^{3j - \frac{1}{3}} \Big).
\]
Since $(3j-1/3)-(3j-1/2)=1/6$, by Lemma \ref{lemmacTNm} 
the non-leading terms \eqref{varpi-a} of 
$\varpi_{2j}$ contribute the following to $\widetilde R_3^+$:
\begin{equation}\label{varpi-a<<power}
O\big(M^{1-3j-2\varepsilon}T^{j+1/2+\varepsilon_1}\big)
\ \ \text{with}
\ \ \varepsilon_1=\varepsilon(3j+3/2)+3\varepsilon^2,
\end{equation}
which is 
\begin{equation}   \label{eautionvarpiminusa2j}
 \ll T^{1+\varepsilon_0}M 
~~\text{if} ~~
M \geq T^{\frac{j -1/2+ \varepsilon_1- \varepsilon_0}
{3j + 2\varepsilon}}.
\end{equation}
So we have
\[
\frac{j -1/2+ \varepsilon_1- \varepsilon_0}{3j +2\varepsilon} \leq
\frac{1}{3} - \frac{1}{6j} + 3\varepsilon
\]
for $j \geq 1$.  Thus the condition on $M$ in (\ref{eautionvarpiminusa2j}) is always true
for $M \geq T^{1/3}$.

We must now estimate the $a^{(2j)}(y_0)$ term in $\varpi_{2j}$ in
(\ref{mainterm}). Recall that $a(y)$ is given in 
\eqref{equationay}. Then 
$a^{(2j)}(y)$ will consist of a sum of terms of the following form.
Let $i_1$ be   the number of times $g(m^2 y/N)$ is differentiated (with respect to $y$)
plus the number of times a power of $y$ is differentiated.
So at every differentiation either the factor $m^2/N$ comes out,
or up to a constant, the factor $1/y$ comes out.  Notice that $1/y\asymp m^2/N$.
Let $i_2$ be the number of times $\widehat{k^*}\Big(\frac{MTc}{2 \pi^2 \sqrt{y}}\Big)$
is differentiated, and put $i_3$ to be the number of times $e\Big(\frac{-T^2c}{4 \pi^2 \sqrt{y}}\Big)$
is differentiated.  (Note that we have no restriction on the order of differentiation, and that
$a^{(2j)}(y)$ will be a sum of these terms over different possible orders of differentiation with various
coefficients.)
Then $i_1 + i_2 + i_3 = 2j$, and neglecting coefficients (which ultimately depend on $n_0$),
$a^{(2j)}(y_0)$ is bounded by the sum over all combinatorial possibilities
of
\begin{equation}\label{3powers}
\Big( \frac{N}{m^2} \Big)^{-\frac{13}{12}-i_1}  \Big( \frac{MTcm^3}{N^{\frac{3}{2}}} \Big)^{i_2}
\Big(  \frac{T^2 c m^3}{N^{\frac{3}{2}}}  \Big)^{i_3} .
\end{equation}

The main term is \eqref{3powers} when $i_3 = 2j$ and we will 
estimate this separately, below.  So we can assume in all terms 
\eqref{3powers}, now,
that
$i_1 + i_2 \geq 1$.
To estimate this error term, which is all but one term in $a^{(2j)}(y_0)$, as before, in
(\ref{mainterm}), we must divide by $\lambda_2^{j + \frac{1}{2}}$ where
 $\lambda_2 \asymp m^3N^{-3/2}/c$ with our assumption on $y_0$. 
We have then a sum of error terms which are all
 \begin{equation}\label{OMcTNm}
 O\Big( M^{i_2} c^{j + i_2 + i_3 + \frac{1}{2}} T^{i_2 + 2i_3} 
N^{\frac{3}{2}j -i_1 -\frac{3}{2}i_2 -\frac{3}{2}i_3 - \frac{1}{3}}
 m^{-3j + 2i_1 + 3i_2 + 3i_3 + \frac{2}{3}}
 \Big).
 \end{equation}
Using $i_3 = 2j - i_1 - i_2$, by Lemma \ref{lemmacTNm} this error term \eqref{OMcTNm} can be seen to be
\begin{equation}\label{equationi1i2}
\ll M^{-3j + i_1 + i_2 -\varepsilon} T^{j - i_1 - i_2 + \frac{3}{2} +\varepsilon}   
\leq T^{1 + \varepsilon_0}M 
~~\text{if}  ~~
M \geq
T^{\frac{j - i_1 - i_2 + \frac{1}{2} + \varepsilon_1 - \varepsilon_0}
{3j - i_1 - i_2 + 1 +  \varepsilon}}.
\end{equation}
Here
$\varepsilon_1 = \varepsilon (3j - i_1 + 9/2)+ 3\varepsilon^2$.
Now
\[
\frac{j-i_1-i_2+\frac{1}{2}+\varepsilon_1-\varepsilon_0}
{3j - i_1 - i_2 + 1 +  \varepsilon}
\leq \frac{j - i_1 - i_2 + \frac{1}{2}}{3j - i_1 - i_2 + 1} + 10\varepsilon
\]
We are assuming $1 \leq i_1 + i_2  \leq 2j$ with $j \geq 1$, and so
\[
\frac{j - i_1 - i_2 + \frac{1}{2}}{3j - i_1 - i_2 + 1} + 10\varepsilon \leq \frac{1}{3} -\frac{1}{6j} +10\varepsilon.
\]
Consequently, the latter condition on $M$ in (\ref{equationi1i2}) is always true for
 $M \geq T^{1/3}$.

This leaves the main term of $a^{(2j)}(y_0)$ (where $i_3 = 2j$ and $i_1 = i_2 = 0$) which is
\begin{equation}
\alpha_j \left.   \label{equationa2jalpha}
\Big( \frac{T^2c}{y^{\frac{3}{2}}} \Big)^{2j}
g\Big(\frac{m^2 y}{N}\Big)
\widehat{k^*}\Big(\frac{MTc}{2 \pi^2 \sqrt{y}}\Big)
e\Big(\frac{-T^2c}{4 \pi^2 \sqrt{y}}\Big)
y ^{-{13}/{12}} \right|_{y_0}  =:  a_{2j}(y_0).
\end{equation}
Here, the constant $\alpha_j$ depends on $j$ which ultimately can be bounded in terms of $n_0$.
If we estimate this similarly, we will get an estimate similar to
(\ref{n0estimate}) with $2j$ replacing $n_0$.  Instead, we will apply the Voronoi formula
to (\ref{equationa2jalpha}).
This is very similar to Li \cite{XLi1}, in applying the Voronoi formula a second time, but only to the main
term 
$$
\frac{e(u_2(y_0)+ {1}/{8})}{\sqrt{|u_2^{''}(y_0)|}}  a(y_0)
$$
in (\ref{mainterm}).  It appears that the term
$( T^2cy^{-\frac{3}{2}} )^{2j}$ in
(\ref{equationa2jalpha})  for $1 \leq j \leq n_0$ is on average $\asymp 1$ in summing over
$m$ and $c$, and so we do not improve upon the second application of Voronoi to the term
for just $j = 0$.

Recall that in (\ref{Psi0asymptotics}) we have 
$$
x = \frac{n_2 n_1^2}{c^3 m},
\ \ \ 
y_0 = x^2 c^6 = \frac{n_2^2 n_1^4}{m^2}. 
$$
Further, $\lambda_2 = \frac{1}{12}  c^{-1} y_0^{-\frac{3}{2}}$.
The contribution to $\widetilde{\mathcal{R}}_3^+$ of $a_{2j}(y_0)$
in (\ref{mathcalR3}) is then $\asymp \widetilde{\mathcal{R}}_{3,j}^+$
where
\begin{eqnarray}\label{equationAa2j}
 \widetilde{\mathcal{R}}_{3,j}^+  
 &=&     
 MT \sum_{m \leq C_2}  \frac{1}{m}  \sum_{c \leq C_2/m}   \frac{1}{c^{\frac{1}{2}}} \sum_{n_1 | cm}
 \sum_{n_2 >0} c \frac{A(n_1, n_2)}{n_1 n_2}
 \\
 &&\times  
 {\mathop{\sum\nolimits^*}
\limits_{u\,(\text{mod}\, mcn_1^{-1}) } }
S(0, 1+un_1 ; c)
e\Big(\frac{n_2 \bar{u}}{mcn_1^{-1}}\Big) e( -xc^2) 
x^{\frac{2}{3}} \frac{a_{2j}(y_0)}{\lambda_2^{j+ \frac{1}{2}}}.\nonumber
\end{eqnarray}
 Inserting what $x$, $y_0$, and $\lambda_2$ are in terms of $n_1$, $n_2$, $c$ and $m$ into \eqref{equationAa2j} we have
\begin{eqnarray} \label{mathcalR3jj}
\widetilde{\mathcal{R}}_{3,j}^+  
&=&    
 M T^{4j+1} \sum_{m \leq C_2} m^{3j-1} \sum_{c \leq C_2/m} c^{3j-1} \sum_{n_1 |cm} \frac{1}{n_1^{6j+1}}
 \sum_{n_2 >0} \frac{A(n_2, n_1)}{n_2^{3j+1}}
\\
&&\times
{\mathop{\sum\nolimits^*}
\limits_{u\,(\text{mod}\, mcn_1^{-1}) } }
S(0, 1+un_1 ; c)
e\Big(\frac{n_2 \bar{u}}{mcn_1^{-1}}\Big)
e\Big( -\frac{n_2 n_1^2}{cm}  \Big) \nonumber
\\
&&\times
g\Big( \frac{n_2^2 n_1^4}{N}  \Big)
\widehat{k^*} \Big( \frac{MTcm}{2\pi^2 n_2 n_1^2}  \Big)  e\Big( -\frac{T^2cm}{4\pi^2 n_2 n_1^2}  \Big).
\nonumber
\end{eqnarray}
In \eqref{mathcalR3jj} we can switch the sums over $n_2$ and 
$u$, pull out $S(0, 1+un_1 ; c)$ which does not depend on $n_2$
 and then the inner sum on $n_2$ is
 \begin{equation} \label{equationinnern2}
 \sum_{n_2 >0} A(n_2, n_1) e\Big( \frac{n_2 u'}{c'}  \Big) b_j(n_2)   
 \end{equation}
 where
\begin{equation}\label{bjy}
 b_j(y) = \frac{1}{y^{3j+1}} g\Big( \frac{y^2 n_1^4}{N}  \Big)
  \widehat{k^*} \Big( \frac{MTcm}{2\pi^2 y n_1^2}  \Big)  e\Big( -\frac{T^2cm}{4\pi^2 y n_1^2}  \Big)
\end{equation}
 and
\begin{equation}\label{uprimecprime}
\frac{u'}{c'} = \frac{\bar{u}- n_1}{mcn_1^{-1}},
~\text{with}~  (u' c') = 1  ~\text{and}~  c' | mcn_1^{-1}.
\end{equation}

We now apply the Voronoi formula for GL(3) 
(Lemma \ref{propVoronoi}) a second time 
to \eqref{equationinnern2}.  (See (4.25) of Li \cite{XLi1}.) 
We have
\begin{eqnarray}\label{equationsecondVoronoi}
&&
\sum_{n_2 \geq 1} A(n_1,n_2)    
e\Big(  \frac{n_2 u'}{c'}  \Big) b(n_2)
\\
&=&
\frac{c'}{4\pi^{{5}/{2}} i} \sum_{l_1 | c' n_1} \sum_{l_2 > 0} \frac{A(l_2, l_1)}{l_1 l_2} S(n_1\bar{u'},l_2; n_1c'l_1^{-1})
B_{0,1}^0 \Big( \frac{l_1^2 l_2}{c'^3 n_1}  \Big)
\nonumber
\\
&&+
\frac{c'}{4\pi^{{5}/{2}} i} \sum_{l_1 | c' n_1} \sum_{l_2 > 0} \frac{A(l_2, l_1)}{l_1 l_2} S(n_1\bar{u'},-l_2; n_1c'l_1^{-1})
B_{0,1}^1 \Big( \frac{l_1^2 l_2}{c'^3 n_1}  \Big).
\nonumber
\end{eqnarray}
(We followed Li \cite{XLi1} in using the notation $B$ rather than $\Psi$.)
From \eqref{equationsecondVoronoi} we have 
$x = {l_2 l_1^2}/(c'^3 n_1)$. From the function 
$g(y^2n_1^4/N)$ in \eqref{bjy} we have $X=\sqrt N/n_1^2$. 
Then 
$$
xX
=
\frac{l_2l_1^2\sqrt N}{c'^3n_1^3}
\geq
\frac{l_2l_1^2\sqrt N}{c^3m^3}
\geq
\frac{l_2l_1^2\sqrt N}{C_2^3}
\geq
l_2l_1^2T^{3/2-3\varepsilon}M^3
\gg
1
$$
by \eqref{uprimecprime}.
Consequently we can apply Lemma \ref{Vasymptotics} to $B_0(x)$ in \eqref{equationsecondVoronoi} which is, up to a negligible 
amount and lower order terms   (up to a constant)
\begin{equation}
x^{{2}/{3}} \int_0^\infty e(v_2(y)) q_j(y) ~dy   \label{B0}
\end{equation}
where
\begin{equation}\label{v2y}
v_2(y) = -3(xy)^{{1}/{3}} - \frac{T^2 c m}{4 \pi^2 y n_1^2}
\end{equation}
and
\begin{equation}\label{qjy}
q_j(y)
= y^{-3j - \frac{4}{3}}
g\Big(\frac{y^2 n_1^4}{N}\Big)
\widehat{k^*}\Big(\frac{MTcm}{2\pi^2yn_1^2}\Big).
\end{equation}
See equation (4.26) of Li \cite{XLi1}.  We need only consider the case
\[
\frac{T^6 c^3 m^3 n_1^2}{10^3 \pi^6 N^2} \leq x \leq \frac{T^6 c^3 m^3 n_1^2}{10 \pi^6 N^2}.
\]
Thus
\begin{equation}
x = \frac{l_2 l_1^2}{c'^3 n_1} \asymp \frac{T^6 c^3 m^3 n_1^2}{ \pi^6 N^2}.   \label{equationxl1l2}
\end{equation}
By the compact support of $g$, we may assume the integral (\ref{B0}) is taken over a compact
segment in $y$ so that $1 \leq y^2 n_1^4/N \leq 2$. 
With these assumptions, differentiating \eqref{v2y} we have
\[
|v''_2(y)| \gg \frac{T^2 cm n_1^4}{N^{{3}/{2}}}.
\]
By \eqref{qjy} the variation of $q_j$ over this interval can 
be seen to be $\ll y_0^{-3j-\frac{4}{3}} T^\varepsilon$.
This computation uses basic
estimates with simple calculus.  Also needed, is that
$$
y \asymp \frac{\sqrt{N}}{n_1^2},\ \
n_1 \leq cm \leq C_2 = \frac{\sqrt{N}}{T^{1-\varepsilon} M},
\ \ \text{and}\ \
M \geq T^{{1}/{3}+2\varepsilon}.
$$
Then, by the second derivative test (see Huxley \cite{Hxly}),
we have by (\ref{equationxl1l2}) that
\begin{eqnarray}\label{B0estimate}
B_0(x)
&\ll&
\Big( \frac{l_2 l_1^2}{c'^3 n_1} \Big)^{\frac{2}{3}}
\Big( \frac{T^2 c m n_1^4}{N^{-{3}/{2}}} \Big)^{-{1}/{2}}
\Big( \frac{\sqrt{N}}{n_1^2} \Big)^{-3j-\frac{4}{3}} T^\varepsilon
\\
&\ll&
T^{3+\varepsilon} c^{{3}/{2}} N^{-\frac{3}{2}j - \frac{5}{4}} n_1^{6j+2} m^{{3}/{2}}.
\nonumber
\end{eqnarray}

Put
$$
L_2 = \frac{T^6 c^3 m^3 n_1^3 c'^3}{\pi^6 N^2 l_1^2}.
$$
Combining (\ref{B0estimate}), (\ref{mathcalR3jj}), and (\ref{equationsecondVoronoi})
 we see
\begin{eqnarray}\label{R3jlongsum}
\widetilde{\mathcal{R}}_{3,j}^+
&\ll&
MT^{4j+1} \sum_{m \leq C_2} m^{3j-1}  \sum_{c \leq C_2/m}
c^{3j-1}  \sum_{n_1 | cm} \frac{1}{n_1^{6j+1}}
\sum_{u (\!\!\!\mod mcn_1^{-1})}  (1+un_1, c) c'
\\
&&\times
\sum_{l_1 | c'n_1} \sum_{l_2 \asymp L_2} \frac{|A(l_1, l_2)|}{l_1l_2}  \times \Big(  \frac{n_1c'}{l_1}  \Big)
(T^{3+\varepsilon} c^{\frac{3}{2}} N^{-\frac{3}{2}j -\frac{5}{4}} n_1^{6j+2} m^{\frac{3}{2}}).
\nonumber
\end{eqnarray}
Here $l_2 \asymp L_2$ means
${L_2}/{10^3} \leq l_2 \leq {L_2}/{10}$.  Also, we have   used the trivial bound for
the Kloosterman sum:
$$
\Big|S\Big(n_1\bar{u}, l_2;\frac{n_1 c'}{l_1}\Big)\Big|
\leq  \frac{n_1 c'}{l_1}.     
$$

Using the estimate (\ref{importantsavings}) and that
$c' \leq {mc}/{n_1}$, we deduce from \eqref{R3jlongsum} that 
\begin{equation}\label{R3jsum}
\widetilde{\mathcal{R}}_{3,j}^+  \ll N^{-\frac{3}{2}j -\frac{5}{4}} M T^{4+\varepsilon}
\sum_{m \leq C_2} m^{3j+\frac{7}{2}} \sum_{c \leq C_2/m}
c^{3j +\frac{7}{2} +\varepsilon}  \sum_{n_1 | cm} \frac{1}{n_1}
\sum_{l_1 | c'n_1} \frac{1}{l_1^2} \sum_{l_2 \asymp L_2} \frac{|A(l_1,l_2)|}{l_2}.
\end{equation} 
Now
\begin{equation}\label{ell2sum}
\sum_{l_2 \asymp L_2} \frac{|A(l_1,l_2)|}{l_2} \ll l_1 L_2^\varepsilon \ll l_1^{1-2\varepsilon}
\frac{T^{6\varepsilon} c^{6\varepsilon} m^{6\varepsilon}} 
{N^{2\varepsilon} },
\end{equation}
\begin{equation}\label{ell1sum}
 \sum_{l_1 | c'n_1} \frac{1}{l_1^{1+2\varepsilon}} 
=O(\varepsilon^{-1}),
\hspace{3mm}
\sum_{n_1 | cm} \frac{1}{n_1} 
\leq
\sum_{n_1\leq cm} \frac{1}{n_1} 
\ll
c^\varepsilon m^\varepsilon.
\end{equation}
Consequently by \eqref{ell2sum} and \eqref{ell1sum}, 
\eqref{R3jsum} is bounded by 
\[
\widetilde{\mathcal{R}}_{3,j}^+  \ll  N^{-\frac{3}{2}j -\frac{5}{4}}
M T^{4j +4+7\varepsilon} 
\sum_{m \leq C_2} m^{3j +\frac{7}{2}+7\varepsilon}
\sum_{c \leq C_2/m} c^{3j +\frac{7}{2}+8\varepsilon} .
\]
Simple calculus and similar estimates then give us
\begin{equation}\label{R3jNMTC2}
\widetilde{\mathcal{R}}_{3,j}^+  
\ll   
N^{-\frac{3}{2}j-\frac{5}{4}}
M 
T^{4j +4+7\varepsilon} 
C_2^{3j+\frac{9}{2}+8\varepsilon}.
\end{equation}
Plugging in $N= T^{3+\varepsilon}$ and 
$C_2 = \sqrt{N}/(T^{1-\varepsilon}M)$ into \eqref{R3jNMTC2}, 
we see 
\begin{equation}\label{finalestimaten0}
\widetilde{\mathcal{R}}_{3,j}^+
\ll M^{-3j -\frac{7}{2}-8\varepsilon} T^{j+\frac{5}{2} +\varepsilon_2}.   
\end{equation}
Here
$\varepsilon_2 = \varepsilon (3j + 33/2) + 12 \varepsilon^2$.
This final term (\ref{finalestimaten0}) is 
$\leq MT^{1+\varepsilon_0}$ if
\begin{equation}\label{equationR3jjfinal}
M \geq T^{\frac{j+\frac{3}{2}+ \varepsilon_2 - \varepsilon_0}
{3j + \frac{9}{2} + 8\varepsilon}}.   
\end{equation}
Now $0 \leq j \leq n_0$, and (with $0<\varepsilon\leq1/2$)
\[
\frac{j+\frac{3}{2}+ \varepsilon_2 - \varepsilon_0}
{3j + \frac{9}{2} + 8\varepsilon}
\leq 
\frac{1}{3} + \frac{3j}{3j+\frac{9}{2}}\varepsilon + 
\frac{33/2}{3j+\frac{9}{2}}\varepsilon
+\frac{12}{3j+\frac92}\varepsilon^2 
\leq \frac{1}{3} + 6\varepsilon.
\]
Thus (\ref{equationR3jjfinal}) is always true for $M \geq T^{\frac{1}{3} + 6\varepsilon}$.

Now we have showed that $\mathcal R_1^+\ll1$ after 
\eqref{R1+Bdd} and that $\mathcal R_2^+$ is negligible 
after \eqref{OTMpower}. For $\mathcal R_3^+$, other 
than negligible terms, if we take arbitrarily small 
$\varepsilon_0>0$, we have proved the bound 
$O(T^{1+\varepsilon_0}M)$ for $M\geq T^{1/3}$ 
in \eqref{eautionvarpiminusa2j} and \eqref{equationi1i2}, and 
for $M\geq T^{1/3+6\varepsilon}$ in \eqref{finalestimaten0} 
and \eqref{equationR3jjfinal}, 
where $\varepsilon>0$ is arbitrarily small independently. 
The only bound left is \eqref{MTpower} which is 
$O(T^{1+\varepsilon_0}M)$ when (\ref{n0estimate}) holds, 
where $\varepsilon>0$ is arbitrarily small as in 
\eqref{importantsavings} and 
$\varepsilon_1=\varepsilon(3n_0+4)+3\varepsilon^2$. 
To have $O(T^{1+\varepsilon_0}M)$ for any 
$M\geq T^{1/3+\varepsilon_0}$ we require 
\begin{equation}\label{<=1/3}
\frac{n_0+1+\varepsilon_1-\varepsilon_0}
{3n_0+5/2+\varepsilon}
\leq\frac13+\varepsilon_0.
\end{equation}
Solving \eqref{<=1/3} for $n_0$ we conclude that 
\eqref{MTpower} is $\ll T^{1+\varepsilon_0}M$ for 
$M\geq T^{1/3+\varepsilon_0}$ provided we take $n_0$ 
sufficiently large, i.e., if we take sufficiently many 
main terms in \eqref{mainterm} when we apply Proposition 
\ref{WSPI}:
\begin{equation}\label{n0large}
n_0\geq
\frac1{\varepsilon_0-\varepsilon}
\Big(
\frac1{18}+\frac{11\varepsilon}9
-\frac{7\varepsilon_0}6+\varepsilon^2
-\frac{\varepsilon\varepsilon_0}3
\Big).
\end{equation}
Here we may simply take $\varepsilon=\varepsilon_0/6$. 

Therefore, we have proved that $\mathcal R^+$ in 
\eqref{equationRplus} is bounded by $T^{1+\varepsilon_0}M$ 
for $M\geq T^{1/3+\varepsilon_0}$ by choosing $n_0$ 
satisfying \eqref{n0large} and setting the $\varepsilon$ 
in \eqref{finalestimaten0} equal to $\varepsilon_0/6$.

\section{$K$-Bessel function terms}
Following Li \cite{XLi1} we split $\mathcal{R}^-$ as in 
\eqref{equationRminus} into $\mathcal{R}_1^- + \mathcal{R}_2^- $ with
\begin{eqnarray}\label{R1minus}
\mathcal{R}_1^- 
&=&
\sum_{m \geq 1} \sum_{n \geq 1} \frac{A(m,n)}{(m^2n)^{\frac{1}{2}}}
g\Big(\frac{m^2n}{N}\Big)
\sum_{c \geq C/m} c^{-1}S(n, -1; c) H_{m,n}^- \Big( \frac{4\pi\sqrt{n}}{c}  \Big)  ,  
\\
\label{R2minus}
\mathcal{R}_2^- 
&=&
\sum_{m \geq 1} \sum_{n \geq 1} \frac{A(m,n)}{(m^2n)^{\frac{1}{2}}}
g\Big(\frac{m^2n}{N}\Big)
\sum_{c \leq C/m} c^{-1}S(n, -1; c) H_{m,n}^- \Big( \frac{4\pi\sqrt{n}}{c}  \Big)   ,   
\end{eqnarray}
where $H_{m,n}^-$ is defined in \eqref{Hmnminus} and 
$C = \sqrt{N} + T$. 
In estimating $\mathcal{R}_1^-$, one can express the $K$-Bessel function in terms of the $I$-Bessel
function.  Set $\sigma = 100$.
Then the estimates for the $I$-Bessel function, along with Li's previous estimates of $V$
(see (4.7) and (5.6) of Li \cite{XLi1}) give a bound for 
\eqref{R1minus} (using the trivial bound for the Kloosterman sum)
\begin{equation}\label{R1minus<<}
\mathcal{R}_1^- \ll MT^{\sigma+1+\varepsilon}
\sum_{m \leq \sqrt{2N}} \frac{1}{m^{1+2\sigma}}  \sum_{n \leq \frac{2N}{m^2}} \frac{A(m,n)}{n^{\frac{1}{2}}}
\sum_{c \geq C/m} \frac{1}{c^{2\sigma}}  e^{4\pi \frac{\sqrt{n}}{c}}.
\end{equation}
Using $n\leq2N/m^2$ and $c \geq C/m$ we see that 
$e^{4\pi\sqrt n/c} \ll 1$.
Further,
\[
\sum_{c \geq C/m} \frac{1}{c^{2\sigma}}
\ll
\Big(\frac Cm\Big)^{1-2\sigma}~~~~\text{and}~~~~
 \sum_{n \leq \frac{2N}{m^2}} \frac{A(m,n)}{n^{\frac{1}{2}}} \ll m\Big( \frac{2N}{m^2}  \Big)^{\frac{1}{2}}.
\]
Plugging this into \eqref{R1minus<<}, and noting the sum over 
$m$ converges, we have
\begin{equation}\label{R1minus<<1}
\mathcal{R}_1^- \ll  \sqrt{N} M T^{\sigma + 1 + \varepsilon  } C^{1-2\sigma}   \ll  1
\end{equation}
for $\varepsilon$ sufficiently small.  Notice this bound holds for $T^\varepsilon \leq M \leq T^{1-\varepsilon}$.

Following the derivation in Li \cite{XLi1}, up to a negligible term, we can write
\begin{equation}\label{Hmnminussum}
H_{m,n}^-(x) = H_{m,n}^{-,1}(x) + H_{m,n}^{-,2}(x)
\end{equation}
where
\begin{eqnarray*}
H_{m,n}^{-,j}(x)
&=&
\frac{4M^j T^{2-j}}{\pi}
\int_\mathbb{R} \int_{|\zeta|\leq T^\varepsilon}
t^{j-1} e^{-t^2} V(m^2 n, tM+T)
\\
&&
\times\cos(x\sinh \zeta) e\Big( -\frac{(tM+T)\zeta}{\pi}  \Big) ~dt d\zeta,
\end{eqnarray*}
for $j = 1, 2$. In \eqref{Hmnminussum} 
$H_{m,n}^{-,2}(x)$ is a lower order term.  We only work with $H_{m,n}^{-,1}(x)$, since the
analysis with $H_{m,n}^{-,2}(x)$ is similar.
Up to a negligible amount, we can write
$H_{m,n}^{-,1}(x) = 4Y_{m,n}(x)$, 
where
\[
Y_{m,n}(x) = \frac{Y_{m,n}^*(x) + Y_{m,n}^*(-x)}{2},
\]
with
\begin{equation}
Y_{m,n}^*(x) = T\int_\mathbb{R} \widehat{k^*}(\zeta) e\Big( -\frac{T\zeta}{M} + \frac{x}{2\pi} \sinh \frac{\zeta \pi}{M}
\Big)   ~d\zeta.   \label{Ymn}
\end{equation}
The part of the integral over $|\zeta| \geq M^{\varepsilon/2}$ in \eqref{Ymn} is negligible.
Further, with this assumption, it can be shown by integration by parts, that
$Y_{m,n}^*(x)$ is negligible unless
\begin{equation}\label{eqx100T}
\frac{T}{100} \leq |x| \leq 100T~~~~\text{and}~~~~\frac{x}{M^3} \ll T^{-\varepsilon},   
\end{equation}
which we now assume. Recall 
$M \geq T^{\frac{1}{3}+2\varepsilon}$.  Thus, the sum over 
$c$ in (\ref{R2minus}) for which 
$$
c \geq \frac{400\pi \sqrt{N}}{Tm}
\ \ \text{or}\ \ c \leq \frac{\sqrt{2}\pi \sqrt{N}}{25Tm}
$$
is negligible.  We thus may assume 
$$
\frac{\sqrt{2}\pi \sqrt{N}}{25Tm} \leq c \leq \frac{400\pi \sqrt{N}}{Tm}
$$
and we will denote this by $c \asymp\sqrt N/(Tm)$.

Using one more nonzero term in the Taylor expansion than
Li \cite{XLi1}, estimating, we have
\begin{eqnarray}\label{Ymn5terms}
Y_{m,n}^*(x)
&=&
T\int_\mathbb{R} \widehat{k^*}(\zeta)
e\Big( -\frac{T\zeta}{M} + \frac{x\zeta}{2M} + \frac{\pi^2 x \zeta^3}{12M^3}
+\frac{\pi^4 x \zeta^5}{240M^5} +\frac{\pi^6 x \zeta^7}{2\cdot 7! M^7}         \Big)   ~d\zeta
\\
&&+
O\Big(  T\int_\mathbb{R} |\widehat{k*} (\zeta)| \frac{|\zeta|^9 |x|}{M^9}   \Big).
\nonumber
\end{eqnarray}
Now, expanding
$$
e\Big(  \frac{\pi^2 x \zeta^3}{12M^3}
+\frac{\pi^4 x \zeta^5}{240M^5} +\frac{\pi^6 x \zeta^7}{2\cdot 7! M^7}         \Big)
$$
in \eqref{Ymn5terms} into a Taylor series of order
$L_2$ (which could depend on $\varepsilon$) we have
\begin{eqnarray*}
Y_{m,n}^*(x)
&=&
T\int_\mathbb{R} \widehat{k^*}(\zeta) e\Big(  -\frac{(2T-x)\zeta}{2M}    \Big)
\\
&&
\times
\sum_{j_1 + j_2 + j_3 \leq L_2} d_{j_1 , j_2 , j_3} \Big( \frac{x\zeta^3}{M^3} \Big)^{j_1}
\Big( \frac{x\zeta^5}{M^5} \Big)^{j_2} \Big( \frac{x\zeta^7}{M^7} \Big)^{j_3}~d\zeta
+ O\Big( \frac{T|x|^{L_2+1}}{M^{3L_2+3}} + \frac{T|x|}{M^9}   \Big),
\end{eqnarray*}
where $d_{j_1 , j_2 , j_3}$ are constants with $d_{0,0,0} = 1$ with the sum taken over
$j_1 \geq 0$, $j_2 \geq 0$, and $j_3 \geq 0$.
It follows that
\begin{eqnarray}\label{eqkderivative}
Y_{m,n}^*(x) 
&=& T \sum_{j_1 + j_2 + j_3 \leq L_2} \frac{d_{j_1 , j_2 , j_3} \cdot   
x^{j_1 + j_2 + j_3}}{(2\pi i M)^{3j_1 + 5j_2 + 7j_3 }}
k^{*(3j_1 + 5j_2 + 7j_3)} \Big( \frac{x-2T}{2M}  \Big)
\\
&&+ 
O \Big(   \frac{T|x|^{L_2+1}}{M^{3L_2+3}}\Big) + 
O\Big(\frac{T|x|}{M^9}  \Big).\nonumber
\end{eqnarray}

We take $L_2$ large enough (possibly depending on $\varepsilon$) so that the first error term in \eqref{eqkderivative} is 
negligible, or rather has as fast inverse polynomial decay as 
desired. (Recall \eqref{eqx100T}.) The contribution to 
$\mathcal{R}_2^-$ coming from the error term
$O \big( {T|x|}/{M^9}     \big)$
can be seen to be bounded by
\begin{equation}\label{T2M9}
\frac{T^2}{M^9} \sum_{m \leq \sqrt{2N}} \frac{1}{m}
\sum_{n \leq {2N}/{m^2}} \frac{|A(m,n)|}{n^{\frac{1}{2}}}
\sum_{c \leq C/m}  \frac{|S(n,-1;c)|}{c}.
\end{equation}
Using Weil's bound for $S(n,-1;c)$ we see
\[
\sum_{c \leq C/m}  \frac{|S(n,-1;c)|}{c}
\ll \Big(\frac Cm\Big)^{\frac{1}{2}+\varepsilon}.
\]
Estimating similarly to the above, we see that 
\eqref{T2M9} is bounded by 
\[
\ll \frac{T^2}{M^9} C^{\frac{1}{2}+\varepsilon} \sqrt{N} = \frac{T^{2+ \frac{3}{2} +\frac{3}{4}+ \varepsilon}}{M^9} .
\]
The above is $\ll T^{1+\varepsilon}M$ by a power of $T$ for $M \geq T^{\frac{1}{3}+2\varepsilon}$.

We take the leading term in the finite series for $Y_{m,n}^*(x)$ in (\ref{eqkderivative}),
as the terms with higher derivatives of $k^*$  can be handled in the same way.  It follows we need to bound
\begin{equation}\label{simR2minus}
\widetilde{\mathcal{R}}_2^- =
T\sum_{m\geq 1} \sum_{n \geq 1}
\frac{A(m,n)}{(m^2 n)^{\frac{1}{2}}}
g\Big(\frac{m^2 n}{N}\Big)
\sum_{c \asymp \frac{\sqrt{N}}{Tm}}
\frac{S(n,-1;c)}{c}k^*\Big(  \frac{{4\pi \sqrt{n}}/{c}-2T}{2M}  \Big).    
\end{equation}
Denote 
\[
r(y) = g\Big(\frac{m^2 y}{N}\Big)
k^*\Big(  \frac{{4\pi \sqrt{y}}/{c}-2T}{2M}  \Big) y^{-\frac{1}{2}},
\]
which is a smooth function of compact support. From 
$x = {n_2 n_1^2}/(c^3 m)$ and $X=N/m^2$ we know 
$$
xX= \frac{n_2 n_1^2N}{c^3m^3} 
\geq \frac{n_2 n_1^2N}{C^3} 
\geq T^{\frac{3}{2} -\varepsilon}
\gg1.
$$
Consequently we may apply the Voronoi formula (Lemma 
\ref{propVoronoi}) and its asymptotic expansion (Lemma 
\ref{Vasymptotics}) to the sum over $n$ in \eqref{simR2minus}. 
As in Li \cite{XLi1} we only consider $R_0(x)$ (see (5.11) of \cite{XLi1}), which is (up to lower order terms)
\[
R_0(x) = 2\pi^4 x i \int_0^\infty r(y) \frac{d_1\sin(6\pi (xy)^{\frac{1}{3}})}{\pi (xy)^{\frac{1}{3}}} ~dy.
\]
Li \cite{XLi1} states that (in an equivalent form) if
$n_2 \gg \frac{N^{\frac{1}{2}} T^\varepsilon}{M^3 n_1^2}$,
then
$r'(y)x^{-\frac{1}{3}} y^{\frac{2}{3}} \ll T^{-\varepsilon}$. 
For this assumption on $n_2$, the integral term in $R_0$ as well as the contribution to
$\widetilde{\mathcal{R}}_2^-$  is found to be negligible.

Thus, we may assume
$n_2 \ll \frac{N^{\frac{1}{2}} T^\varepsilon}{M^3 n_1^2}$.
Now, $r(y)$ is negligible unless
$$
\Big|  \frac{{2\pi\sqrt{y}}/{c}-T}{M}   \Big| 
\leq T^\varepsilon.
$$
This gives us an interval of width $\ll T^{1+\varepsilon} M c^2$ where $y \asymp {N}/{m^2}$, and so
\[
R_0(x)
\ll
\Big(\frac{n_2 n_1^2}{c^3 m}\Big)^\frac{2}{3}
\Big(\frac N{m^2}\Big)^{-\frac{5}{6}} T^{1+\varepsilon} M c^2.
\]
Using this estimate along with (\ref{importantsavings}) it follows from \eqref{simR2minus} that
\begin{eqnarray}\label{R2-<<}
\widetilde{\mathcal{R}}_2^-
&\ll&
T
\sum_{m\leq \sqrt{2N}}
\sum_{c \asymp \frac{\sqrt{N}}{Tm}}
\sum_{n_1 | cm}
\sum_{n_2 \ll {\sqrt{N}T^\varepsilon}/(M^3 n_1^2)}
\frac{|A(n_1, n_2)|}{n_1 n_2}
\frac{mc^{1+\varepsilon}}{n_1}
\\
&&\times
\Big(\frac{n_2 n_1^2}{c^3 m}\Big)^\frac{2}{3}
\Big(\frac N{m^2}\Big)^{-\frac{5}{6}}
T^{1+\varepsilon} M c^2
\nonumber
\\
&=&
T^{2+\varepsilon} M N^{-\frac{5}{6}} \sum_{m\leq \sqrt{2N}} m \sum_{c \asymp C/m} c^{1+\varepsilon}
\sum_{n_1 | cm} n_1^{-\frac{2}{3}}
\sum_{n_2 \ll {\sqrt{N}T^\varepsilon}/(M^3 n_1^2) }
\frac{|A(n_1, n_2)|}{n_2^{\frac{1}{3}}}.
\nonumber
\end{eqnarray}
Estimating similarly to the last section, the inner sum 
in \eqref{R2-<<} is 
\[
 \sum_{n_2 \ll {\sqrt{N}T^\varepsilon}/(M^3 n_1^2) }
\frac{|A(n_1, n_2)|}{n_2^{{1}/{3}}}
\ll
 n_1
\Big( \frac{\sqrt{N}T^\varepsilon}{M^3 n_1^2} \Big)^{{2}/{3}}.
\]
Plugging this and 
\[
\sum_{n_1 | cm} \frac{1}{n_1} \ll (cm)^\varepsilon  .
\]
into \eqref{R2-<<} we see
\[
\widetilde{\mathcal{R}}_2^-
\ll
T^{2+{5\varepsilon}/{3}} M^{-1} N^{-{1}/{2}}
\sum_{m \leq \sqrt{2N}} m^{1+\varepsilon}
\sum_{c \asymp \frac{\sqrt{N}}{Tm}} c^{1+2\varepsilon}  .
\]
Now
\[
\sum_{c \asymp \frac{\sqrt{N}}{Tm}} c^{1+2\varepsilon}
\ll
\Big(\frac{\sqrt{N}}{Tm} \Big)^{2+2\varepsilon}  ~~~~\text{and}~~~~
\sum_{m \leq \sqrt{2N}} \frac{1}{m^{1+\varepsilon}}  \ll  \frac{1}{\varepsilon}.
\]
Consequently,
$\widetilde{\mathcal{R}}_2^- \ll  T^{\frac{3}{2}  +\frac{13}{6} \varepsilon} M^{-1}$. 
This is clearly smaller than $T^{1+\varepsilon_0}M$ 
if $M\geq T^{1/4+13\varepsilon/12-\varepsilon_0/2}$. 

Together with \eqref{R1minus<<1} for 
$T^\varepsilon \leq M \leq T^{1-\varepsilon}$, we conclude 
that $\mathcal R^-\ll T^{1+\varepsilon_0}M$ if 
$M\geq T^{1/3}$. Recall that $\mathcal D$ in \eqref{Ddefine} 
is negligible for $T^\varepsilon \leq M \leq T^{1-\varepsilon}$ 
as we pointed at the end of Section 3. Together with our 
conclusion at the end of Section 4 for $\mathcal R^+$, we 
have proved that $\mathcal R$ in \eqref{Rdefine} is bounded 
by $O(T^{1+\varepsilon_0}M)$ for 
$T^{1/3+\varepsilon_0}\leq M\leq T^{1/2}$. 
This implies Theorem \ref{SumIntBdd}. 
\qed

\end{document}